 \numberwithin{equation}{section}
\theoremstyle{nonumberplain}  
\newtheorem{proof}{Proof} 
\theoremstyle{plain}  
\newtheorem{proposition}{Proposition}[section]  
\newtheorem{corollary}[proposition]{Corollary}  
\newtheorem{lemma}[proposition]{Lemma}  
\newtheorem{theorem}[proposition]{Theorem}   
\newtheorem{remark}[proposition]{Remark}
\newtheorem{example}[proposition]{Example}
\newtheorem{definition}[proposition]{Definition}
\theoremstyle{nonumberplain}
\newcommand{\R}{\mathbb{R}}
\newcommand{\V}{\mathcal{V}}
\newcommand{\N}{\mathbb{N}}
\newcommand{\C}{\mathbb{C}}
\newcommand{\dd}{\mathrm{d}}
\newcommand{\id}{\mathrm{id}}
\newcommand{\<}{\left\langle}
\renewcommand{\>}{\right\rangle}
\renewcommand{\hat}{\widehat}
\title{Heat Kernels as Path Integrals}
\author{Matthias Ludewig}
\begin{document}

\maketitle

\begin{center}
  The University of Adelaide\\
  Adelaide SA 5005 \\
  Australia \\
 matthias.ludewig@adelaide.edu.au
 
 \vspace{1cm}
\begin{small}
{\em This article is an exanded version of a survey article originally published in} \cite{SFBReport}.
\end{small}

\end{center}

\section{Introduction}

Path integrals are an ubiquitous concept in quantum mechanics and quantum field theory. Roughly speaking, the idea is that for a classical observable $\mathcal{O}$, the expectation value $\langle\hat{\mathcal{O}}\rangle$ of the corresponding "quantized" observable $\hat{\mathcal{O}}$ can be calculated with a path integral of the form
\begin{equation} \label{PrototypicalPathIntegral}
  \langle \hat{\mathcal{O}}\rangle~ \propto~ \int_{\mathcal{F}} \exp \bigl(i S[\gamma] \bigr) \mathcal{O}[\gamma] \mathcal{D}\gamma,
\end{equation}
where the integration domain $\mathcal{F}$ is a certain space of "fields" (usually some space of maps $\gamma: \Sigma \rightarrow M$) and 
\begin{equation*}
  S[\gamma] = \int_\Sigma \ell\bigl (\gamma(x), d\gamma(x) \bigr) \dd x
\end{equation*}
is some classical action, given be the integral over a Lagrangian. In \eqref{PrototypicalPathIntegral}, the integral $\mathcal{D}\gamma$ denotes some kind of Lebesgue type measure on the space of fields $\mathcal{F}$. This is very problematic mathematically, because in most situations of interest, no such measure $\mathcal{D}\gamma$ exists. 

An example for this situation is the following.  Let $K_t(x, y)$ be the transition density of the Schr\"odinger equation
\begin{equation} \label{SchroedingerEquation}
  -i \frac{\partial}{\partial t} u(t, x) + \frac{1}{2}\Delta u(t, x) + V(x) u(t, x) = 0, ~~~~~~~~~ u(0, x) = u_0(x),
\end{equation}
on some (compact) Riemannian manifold $M$, characterized by the property that the solutions $u(t, x)$ are given in terms of the initial value $u_0$ by the integral
\begin{equation} \label{ConvolutionSolution}
  u(t, x) = \int_M K_t(x, y) u_0(y) \dd y.
\end{equation}
$K_t(x, y)$ can be formally represented by the path integral of the form \eqref{PrototypicalPathIntegral}, where the space of fields $\mathcal{F}$ is the space of continuous paths that travel from $x$ to $y$ in time $t$. Concretely, 
\begin{equation} \label{SchroedingerKernelPathIntegral}
  K_t(x, y) \stackrel{\text{formally}}{=} \fint_{\mathcal{F}}  \exp \bigl( i S[\gamma] \bigr) \mathcal{D}\gamma
\end{equation}
where  $S[\gamma]$ is the classical action
\begin{equation} \label{ActionWithPotential}
  S[\gamma] = \frac{1}{2} \int_0^t \bigl|\dot{\gamma}(s)\bigr|^2 + \int_0^t V\bigl(\gamma(s)\bigr) \dd s.
\end{equation}
and the slash over the integral sign in \eqref{SchroedingerKernelPathIntegral} denotes a suitable normalization.

Making this rigorous is still not easy due to the oscillatory exponent in \eqref{SchroedingerKernelPathIntegral} and the corresponding singular nature of the Schr\"odinger kernel $K_t(x, y)$ on general Riemannian manifolds. However, if we instead consider the heat equation
\begin{equation} \label{HeatEquationWithPotential}
  \frac{\partial}{\partial t} u(t, x) + \frac{1}{2}\Delta u(t, x) + V(x) u(t, x) = 0, ~~~~~~~~~ u(0, x) = u_0(x),
\end{equation}
we obtain the analogous formula
\begin{equation}  \label{HeatKernelPathIntegral}
  K_t(x, y) \stackrel{\text{formally}}{=} \fint_{\mathcal{F}}  \exp \bigl(- S[\gamma] \bigr) \mathcal{D}\gamma.
\end{equation}
for the heat kernel $K_t(x, y)$. Here the exponent is exponentially decaying, which simplifies the analysis considerably. \eqref{HeatKernelPathIntegral} can be made rigorous by approximating the space of paths $\mathcal{F}$ by spaces of piecewise geodesics. In this note, we give an overview over the results obtained in this direction.

To introduce the relevant ideas, we start out with a review of path integral formulas in $\R^n$ and discuss the relation to the Wiener measure. Afterwards, we give path integral approximation theorems for solutions to the heat equation on compact Riemannian manifolds. Finally, we indicate how the heat kernel can be represented by a path integral and show that the short-time asymptotic expansion of the heat kernel is related to the stationary phase expansion of the corresponding path integral.

\section{Approximation of Path Integrals: the flat Case}

Let us start with a classical calculation which can be found in most physics textbooks on quantum mechanics and which probably originates in \cite{FeynmanHibbs}. Consider the heat equation
\begin{equation} \label{HeatEquation}
  \frac{\partial}{\partial t} u(t, x) + \frac{1}{2}\Delta u(t, x) = 0, ~~~~~~~~~~~ u(0, x) = u_0(x)
\end{equation}
in Euclidean space $\R^n$. In this case, the corresponding heat kernel is given by the explicit formula
\begin{equation*}
  K_t(x, y) = (2\pi t)^{-n/2} \exp \left( -\frac{|x-y|^2}{2 t} \right).
\end{equation*}
The integrable solution to \eqref{HeatEquation} is then given in terms of this fundamental solution by formula \eqref{ConvolutionSolution}. If now $\tau = \{ 0 = \tau_0 < \tau_1 < \dots < \tau_N = t\}$ is a partition of the interval $[0, t]$ and $\Delta_j\tau := \tau_j - \tau_{j-1}$ denotes the corresponding increments, we can use the Markhov property of the heat kernel to obtain (writing $x_0 := x$)
\begin{equation} \label{Calculation1}
\begin{aligned}
  u(t, x) &= \int_{\R^n} \cdots \int_{\R^n} \prod_{j=1}^N K_{\Delta_j\tau}(x_j, x_{j-1}) \,u_0(x_N)\, \dd x_1 \cdots \dd x_N\\
  &= \prod_{j=1}^N (2 \pi \Delta_j\tau)^{-n/2} \int_{\R^{n\times N}} \exp\left(-\frac{1}{2}\sum_{j=1}^N \frac{|x_j-x_{j-1}|^2}{\Delta_j\tau} \right) u_0(x_N) \dd x.
\end{aligned}
\end{equation}
Let $\gamma: [0, t] \rightarrow \R^n$ be the piecewise linear path with $\gamma(\tau_j) = x_j$ so that $\gamma$ is given by the explicit formula
\begin{equation*}
  \gamma(\tau_{j-1}+s) = x_{j-1} + \frac{s}{\Delta_j\tau}(x_j - x_{j-1}), ~~~~~~~ s \in [0, \Delta_j\tau].
\end{equation*}
for $j=1, \dots, N$. Then for its action, one finds
\begin{equation*}
  S_0[\gamma] := \frac{1}{2} \int_0^1 \bigl|\dot{\gamma}(s)\bigr|^2 \dd s = \frac{1}{2}\sum_{j=1}^N \int_{\tau_{j-1}}^{\tau_j} \left|\frac{x_j-x_{j-1}}{\Delta_j\tau}\right|^2 \dd s
 = \frac{1}{2}\sum_{j=1}^N \frac{|x_j-x_{j-1}|^2}{\Delta_j\tau}.
\end{equation*}
Hence the exponent in \eqref{Calculation1} is exactly the action of the piece-wise linear path with the nodes $x_0, \dots, x_N$, subordinated to the partition $\tau$. Therefore, the integral over $\R^{n\times N}$ in \eqref{Calculation1} can be replaced by an integral over the space 
\begin{equation*}
  H_{x;\tau}(\R^n) := \bigl\{ \gamma \in C^0([0, t], \R^n) \mid \gamma(0) = x, \gamma|_{[\tau_{j-1}, \tau_j]}~\text{is a straight line for}~j=1, \dots, N\bigr\}.
\end{equation*}
In this flat setup, $H_{x;\tau}(\R^n)$ is a an affine space. Its tangent spaces are all canonically isomorphic to the space of piece-wise linear vector fields $X$ (subordinated to the partition $\tau$) with $X(0) = 0$. If we put on it the (flat) Riemannian metric
\begin{equation*}
(X, Y)_{H^1} = \int_0^t \< X^\prime(s), Y^\prime(s)\> \dd s,
\end{equation*}
which is non-degnerate since $X$ and $Y$ have vanishing initial values, then the $\tau$-dependent pre-factor in \eqref{Calculation1} is absorbed into the volume measure corresponding to the metric and we obtain that
\begin{equation} \label{ExactPathIntegral}
  u(t, x) = (2 \pi)^{-nN/2} \int_{H_{x;\tau}(\R^n)} \exp\bigl({-S_0[\gamma]}\bigr) \,u_0 \bigl(\gamma(t)\bigr) \dd^{H^1}\gamma.
\end{equation}
for each partition $\tau = \{0 = \tau_0 < \tau_1 < \dots < \tau_N = t\}$ of the interval $[0, t]$. Notice that $\dim(H_{x;\tau}(\R^n)) = nN$ so that the pre-factor of \eqref{ExactPathIntegral} only depends on the dimension of the integration domain. This gives our first "path integral formula": The solution $u(t, x)$ to the heat equation is written as an integral over a certain (finite-dimensional) space of paths. 

\begin{remark}The heat equation also describes particle diffusion, and it is elucidating to give an interpretation of formula \eqref{ExactPathIntegral} in terms of statistical mechanics: For a particle to move from a point $y$ to a point $x$ in time $t$, it has to take some continuous path $\gamma: [0, t] \rightarrow \R^n$ with $\gamma(0) = x$ and $\gamma(t) = y$. If $u_0$ is interpreted as an initial particle density, then \eqref{ExactPathIntegral} tells us that the new density after time $t$ is given at the point $x$ by averaging over all paths that end at this point, where we have to weight with the initial distribution and the classical action: It is unlikely for a particle to take a path with a large action.\end{remark}

In formula \eqref{ExactPathIntegral}, one integrates only over some space of polygon paths rather than over the space of {\em all} continuous paths $C([0, t], \R^n)$. On the latter, there is for each point $x \in \R^n$ an important probability measure, the {\em Wiener measure} $\mathbb{W}_x$. It is characterized by the property that for all bounded and continuous cylinder functions $f$ on $C([0, t], \R^n)$ (i.e.\ functions for which there exists a continuous function $F$ on $\R^{n\times N}$ and some partition $\tau$ of the interval $[0, t]$ such that $f(\gamma) = F(\gamma(\tau_1), \dots, \gamma(\tau_N))$), one has
\begin{equation} \label{WienerCharacterization}
  \int_{C([0, t], \R^n)} f(\gamma) \,\dd \mathbb{W}_x(\gamma) = \int_{\R^n} \cdots \int_{\R^n} F(x_1, \dots, x_N) \prod_{j=1}^N K_{\Delta_j\tau}(x_j, x_{j-1}) \dd x_1\cdots \dd x_N
 \end{equation}
By Kolmogorov's extension theorem, this property characterizes the Wiener measure uniquely \cite[Thm.~2.1.5]{oeksendal}. The Wiener measures are related to our path integrals \eqref{ExactPathIntegral} as follows. For a partition $\tau$ of the interval $[0, t]$, let
    \begin{equation*}
    \mathbb{W}_{x, \tau} :=  (j_{x,\tau})_* \Bigl[(2 \pi t)^{-nN/2} \exp\bigl({-S_0[\gamma]}\bigr) \dd^{H^1}\gamma\Bigr]
  \end{equation*}
 be the measure on $C([0, t], \R^n)$ given by pushforward of the measure on $H_{x;\tau}(M)$ by the inclusion map $j_{x,\tau}: H_{x;\tau}(M) \rightarrow C([0, t], \R^n)$. Then we have the following result.
 
\begin{proposition}[Approximation of Wiener Measure] \label{PropWienerConvergence}
 As the mesh $|\tau| = \max \Delta_j \tau$ of the partitions tends to zero, $\mathbb{W}_{x, \tau}$ converges against the Wiener measure $\mathbb{W}_x$ in the weak-$*$-topology of finite Borel measures on $C([0, t], \R^n)$.
\end{proposition}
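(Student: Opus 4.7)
The plan is to realize both $\mathbb{W}_{x,\tau}$ and $\mathbb{W}_x$ on a common probability space, deducing weak-$*$ convergence from almost sure convergence of a pathwise approximation.

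First, I would identify the concrete meaning of $\mathbb{W}_{x,\tau}$ as a measure on $C([0,t],\R^n)$. The evaluation map
\[
H_{x;\tau}(\R^n) \ni \gamma \longmapsto (\gamma(\tau_1), \ldots, \gamma(\tau_N)) \in \R^{n\times N}
\]
is a global diffeomorphism, and the Jacobian computation already performed in passing from \eqref{Calculation1} to \eqref{ExactPathIntegral} (read in reverse) shows that under this identification $\mathbb{W}_{x,\tau}$ pushes forward to the product density
\[
\prod_{j=1}^N K_{\Delta_j\tau}(x_{j-1}, x_j)\, \dd x_1 \cdots \dd x_N
\]
on $\R^{n\times N}$. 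Comparing with the characterizing property \eqref{WienerCharacterization}, this says precisely that $\mathbb{W}_{x,\tau}$ is the law on $C([0,t],\R^n)$ of the random piecewise linear path whose nodes at $\tau_1,\ldots,\tau_N$ are distributed as the marginals of a Brownian motion starting from $x$.

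Next I would set up an explicit coupling. Let $B$ be a sample from $\mathbb{W}_x$ and let $B^\tau$ be the polygonal path which agrees with $B$ at every $\tau_j$ and is linear in between. By the previous step, $B^\tau$ has law $\mathbb{W}_{x,\tau}$, so the proposition is equivalent to the statement that $B^\tau$ converges to $B$ in distribution as $|\tau|\to 0$. I would then invoke the classical fact that polygonal interpolations of a continuous function on $[0,t]$ converge to it uniformly as the partition mesh shrinks, with error bounded by twice the modulus of continuity. Since $B$ is uniformly continuous on $[0,t]$ for $\mathbb{W}_x$-almost every sample, this gives $\|B^\tau - B\|_{C^0} \to 0$ almost surely, and almost sure convergence implies convergence in distribution, which is the desired weak-$*$ convergence.

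The only step requiring real care is the first: one must verify that the $H^1$ Riemannian volume on $H_{x;\tau}(\R^n)$, expressed in the node coordinates $(x_1,\ldots,x_N)$, produces exactly the Jacobian factor $\prod_{j=1}^N (\Delta_j\tau)^{-n/2}$, so that together with the stated normalization and $\exp(-S_0[\gamma])$ it reproduces the heat kernel product density on $\R^{n\times N}$. This is essentially the same computation as at the start of Section~2, merely repackaged, and no new ideas are needed. Once that bookkeeping is settled, everything reduces to the purely pathwise fact that polygonal interpolations converge uniformly to continuous paths.
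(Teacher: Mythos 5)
Your proposal is correct and follows essentially the same route as the paper: the identity $\int f\,\dd\mathbb{W}_{x,\tau} = \int f(\gamma_{x,\tau})\,\dd\mathbb{W}_x$ (your coupling of $B^\tau$ with $B$) combined with the $\mathbb{W}_x$-almost sure uniform convergence of polygonal interpolations, which is exactly the content of Lemma~\ref{LemmaPointwiseConvergence}, followed by dominated convergence. The only presentational difference is that you make explicit the Jacobian bookkeeping identifying the $H^1$ volume with the product heat-kernel density, which the paper leaves implicit in the passage from \eqref{Calculation1} to \eqref{ExactPathIntegral}.
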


\begin{proof}
  By \eqref{WienerCharacterization}, we have
  \begin{equation*}
    \int_{C([0, t], \R^n)} f(\gamma) \dd \mathbb{W}_{x, \tau}(\gamma) = \int_{C([0, t], \R^n)} f(\gamma^\tau) \dd \mathbb{W}_x(\gamma),
  \end{equation*}
  where for $\gamma \in C^0([0, t], \R^n)$, we denote by $\gamma_{x,\tau}$ the path in $H_{x;\tau}(M)$ with $\gamma_{x,\tau}(\tau_j) = \gamma(\tau_j)$ for all $j=1, \dots, N$. 
  Using Lebesgue's theorem of dominated convergence, the proposition therefore follows from the subsequent lemma.
\end{proof}

\begin{lemma} \label{LemmaPointwiseConvergence}
  For a continuous function $f$ on $C([0, t], \R^n)$, denote by $f_{x, \tau}$ the continuous function given by $f_{x,\tau}(\gamma) = f(\gamma_{x,\tau})$. Then we have $f_{x,\tau} \rightarrow f$ pointwise $\mathbb{W}_x$-almost surely, as $|\tau| \rightarrow 0$.
\end{lemma}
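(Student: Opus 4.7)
The plan is to reduce the statement to the purely deterministic fact that piecewise linear interpolation of a continuous function converges uniformly on $[0,t]$ as the mesh tends to zero, and then invoke the defining property of the Wiener measure (that it is concentrated on continuous paths starting at $x$).

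First I would observe that it suffices to show that $\gamma_{x,\tau} \to \gamma$ uniformly on $[0,t]$ for $\mathbb{W}_x$-almost every $\gamma \in C([0,t],\R^n)$. Indeed, $f$ is continuous with respect to the topology of uniform convergence on $C([0,t],\R^n)$, so uniform convergence $\gamma_{x,\tau} \to \gamma$ immediately yields $f_{x,\tau}(\gamma) = f(\gamma_{x,\tau}) \to f(\gamma)$.

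Next I would fix a path $\gamma$ with $\gamma(0) = x$ (which holds $\mathbb{W}_x$-almost surely by the defining property \eqref{WienerCharacterization} of the Wiener measure, applied to the one-point partition $\tau = \{0\}$). Such a $\gamma$ is continuous on the compact interval $[0,t]$ and therefore uniformly continuous; let $\omega_\gamma(\delta) := \sup\{|\gamma(s) - \gamma(s')| : |s-s'| \leq \delta\}$ denote its modulus of continuity, which satisfies $\omega_\gamma(\delta) \to 0$ as $\delta \to 0$. For any $s \in [\tau_{j-1}, \tau_j]$, the value $\gamma_{x,\tau}(s)$ is a convex combination of $\gamma(\tau_{j-1})$ and $\gamma(\tau_j)$, so
\begin{equation*}
  \bigl|\gamma_{x,\tau}(s) - \gamma(s)\bigr| \leq \max\bigl(|\gamma(\tau_{j-1}) - \gamma(s)|,\, |\gamma(\tau_j) - \gamma(s)|\bigr) \leq \omega_\gamma(|\tau|).
\end{equation*}
Taking the supremum over $s \in [0,t]$ gives $\|\gamma_{x,\tau} - \gamma\|_\infty \leq \omega_\gamma(|\tau|) \to 0$.

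The only step that requires any care is the observation that $\gamma(0) = x$ holds $\mathbb{W}_x$-almost surely, which is where the choice of basepoint in the definition of $\gamma_{x,\tau}$ (forcing $\gamma_{x,\tau}(0) = x$) matches up with the measure; otherwise the piecewise linear reconstruction would miss the initial segment. Everything else is routine: the convergence of piecewise linear interpolants of a uniformly continuous function is elementary, and the continuity of $f$ does the rest. There is no real obstacle; the content of the lemma is essentially that Wiener-almost every continuous path is approximated uniformly by its own piecewise linear interpolation along any mesh-refining sequence.
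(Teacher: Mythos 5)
Your proof is correct and follows essentially the same route as the paper's: restrict to the full-measure set of paths with $\gamma(0)=x$, use uniform continuity of $\gamma$ on the compact interval $[0,t]$ to get $\|\gamma_{x,\tau}-\gamma\|_\infty\to 0$, and conclude by continuity of $f$. The only cosmetic difference is that you bound $|\gamma_{x,\tau}(s)-\gamma(s)|$ by $\omega_\gamma(|\tau|)$ via the convex-combination observation, whereas the paper uses the triangle inequality through $\gamma(\tau_{j-1})$ to get the slightly weaker bound $2\omega_\gamma(|\tau|)$; both suffice.
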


\begin{proof}
Since paths that start at $x$ have full measure with respect to $\mathbb{W}_x$, the lemma follows if we show that $\|\gamma_{x,\tau} -\gamma\|_\infty \rightarrow 0$ as $|\tau| \rightarrow 0$ for all $\gamma$ with $\gamma(0) = x$, since then also $f(\gamma_{x,\tau}) \rightarrow f(\gamma)$, by continuity of $f$. For $s \in [\tau_{j-1}, \tau_j]$, we have
\begin{equation*}
  \bigl|\gamma(s) - \gamma_{x,\tau}(s)\bigr| \leq \bigl|\gamma(s) - \gamma(\tau_{j-1})\bigr| + \bigl|\gamma_{x,\tau}(s) - \gamma(\tau_{j-1})\bigr| \leq \bigl|\gamma(s) - \gamma(\tau_{j-1})\bigr| + \bigl|\gamma(\tau_j) - \gamma(\tau_{j-1})\bigr|,
\end{equation*}
where we used that $\gamma_{x,\tau}$ is piecewise linear and that the paths $\gamma$, $\gamma_{x,\tau}$ coincide at the nodes $\tau_0, \dots, \tau_N$. As a function defined on a compact interval, $\gamma$ is uniformly continuous, hence for each $\varepsilon>0$, there exists $\delta>0$ such that $|\gamma(s_1)-\gamma(s_2)| < \varepsilon/2$ whenever $|s_1-s_2|< \delta$. If now $|\tau|<\delta$, then $|\gamma(s) - \gamma_{x,\tau}(s)| < \varepsilon$ for each $s \in [0, t]$.
\end{proof}

Lemma.~\ref{LemmaPointwiseConvergence} also allows us to obtain the following non-trivial path integral representation of solutions to the heat equation with potential \eqref{HeatEquationWithPotential}. Namely, one has the following

\begin{corollary}[Heat Equation with Potential] \label{CorollaryPotential}
  Let $u(t, x)$ be a bounded solution of the heat equation with potential \eqref{HeatEquationWithPotential}. Then
  \begin{equation} \label{PathIntegralPotential}
    u(t, x) = \lim_{|\tau|\rightarrow 0} \fint_{H_{x;\tau}(M)} \exp \bigl(-S[\gamma]\bigr) u_0\bigl(\gamma(t)\bigr) \dd^{H^1} \gamma, 
  \end{equation}
  where $S$ denotes the action functional \eqref{ActionWithPotential} and the slash over the integral sign denotes a normalization of the path integral through division by $(2\pi )^{\dim(H_{x;\tau}(M))/2}$.
\end{corollary}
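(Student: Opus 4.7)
My plan is to reduce the corollary to three ingredients: the Feynman--Kac formula (as the only new analytic input identifying $u(t,x)$ with a Wiener expectation), the weak-$*$ convergence $\mathbb{W}_{x,\tau}\to\mathbb{W}_x$ from Proposition~\ref{PropWienerConvergence}, and the fact that the relevant test functional is bounded and continuous. Throughout I shall assume $V$ and $u_0$ bounded and continuous (the natural hypotheses making both Feynman--Kac applicable and $F$ below well behaved).

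First, I split the action as $S[\gamma] = S_0[\gamma] + \int_0^t V(\gamma(s))\,ds$ and introduce
\[F(\gamma) := \exp\!\left(-\int_0^t V(\gamma(s))\,ds\right) u_0(\gamma(t)),\]
which is bounded and continuous on $C([0,t],\R^n)$ in the sup-norm topology: continuity follows from uniform continuity of $V$ on the compact range of $\gamma$ (whence continuous dependence of $\int V\circ\gamma$ on $\gamma$), together with continuity of $u_0$ and of the endpoint evaluation. By the flat path integral formula \eqref{ExactPathIntegral} applied to $V=0$, the normalized measure $(2\pi)^{-nN/2}e^{-S_0[\gamma]}\,d^{H^1}\gamma$ on $H_{x;\tau}(\R^n)$ is precisely the one whose pushforward under $j_{x,\tau}$ defines $\mathbb{W}_{x,\tau}$. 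Pulling the $V$-exponential inside the integrand therefore gives
\[\fint_{H_{x;\tau}(M)} e^{-S[\gamma]}\, u_0(\gamma(t))\,d^{H^1}\gamma = \int_{C([0,t],\R^n)} F(\gamma)\,d\mathbb{W}_{x,\tau}(\gamma).\]

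Second, Proposition~\ref{PropWienerConvergence} supplies weak-$*$ convergence $\mathbb{W}_{x,\tau}\to\mathbb{W}_x$, which applied to the bounded continuous functional $F$ yields
\[\lim_{|\tau|\to 0} \int_{C([0,t],\R^n)} F(\gamma)\,d\mathbb{W}_{x,\tau}(\gamma) = \int_{C([0,t],\R^n)} F(\gamma)\,d\mathbb{W}_x(\gamma),\]
and the right-hand side equals $u(t,x)$ by the Feynman--Kac formula. The main obstacle --- indeed the only step not already furnished by the preceding material --- is invoking Feynman--Kac, a standard result derivable for instance via the Trotter product formula; one has only to match its hypotheses on $V$ and $u_0$ to those of the corollary. (Alternatively, one could bypass weak-$*$ convergence and combine Lemma~\ref{LemmaPointwiseConvergence} directly with dominated convergence, since $F(\gamma_{x,\tau})\to F(\gamma)$ pointwise $\mathbb{W}_x$-almost surely and $F$ is uniformly bounded.) Everything beyond Feynman--Kac is a direct assembly of Proposition~\ref{PropWienerConvergence} with the observation that the potential merely modifies the test function in the Wiener expectation.
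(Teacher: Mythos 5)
Your argument is correct and is essentially the proof given in the paper: both rewrite the normalized integral over $H_{x;\tau}(\R^n)$ as a Wiener integral via \eqref{WienerCharacterization}, pass to the limit $|\tau|\rightarrow 0$, and identify the limiting Wiener expectation with $u(t,x)$ by the Feynman--Kac formula. The only cosmetic difference is that you phrase the limit step through the weak-$*$ convergence of Proposition~\ref{PropWienerConvergence}, whereas the paper invokes Lemma~\ref{LemmaPointwiseConvergence} together with dominated convergence directly --- precisely the alternative you yourself note in parentheses.
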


\begin{proof}
  By \eqref{WienerCharacterization},
  \begin{equation*}
  \fint_{H_{x;\tau}(\R^n)} \exp \bigl(-S[\gamma]\bigr) u_0\bigl(\gamma(t)\bigr) \dd \gamma = \int_{C([0, t], \R^n)} \!\!\!\!\!\exp \left(-\int_0^t V\bigl(\gamma_{x,\tau}(s)\bigr)\dd s\right) u_0\bigl(\gamma_{x,\tau}(t)\bigr) \dd \mathbb{W}(\gamma)
  \end{equation*} 
  The result now follows from Lemma~\ref{LemmaPointwiseConvergence} and the Feynman-Kac formula, which states that
  \begin{equation} \label{FeynmanKac}
    u(t, x) = \int_{C([0, t], \R^n)} \!\!\!\!\!\exp \left(-\int_0^t V\bigl(\gamma(s)\bigr)\dd s\right) u_0\bigl(\gamma(t)\bigr) \dd \mathbb{W}_x(\gamma),
  \end{equation}
  see \cite{oeksendal}, Thm.~8.2.1.
\end{proof}

Of course, for non-zero potential, one cannot expect $u(t, x)$ to be given {\em exactly} by an integral over $H_{x;\tau}(\R^n)$ as in \eqref{ExactPathIntegral} and the mesh-limit is necessary, as in \eqref{PathIntegralPotential}. 

\section{Approximation of Path Integrals: The curved Case}

A natural question from a geometric point of view is now whether results like Corollary~\ref{CorollaryPotential} hold in a curved setup, i.e.\ when $\R^n$ is replaced by a smooth manifold. Of course, it is well-known that the Feynman-Kac formula \eqref{FeynmanKac} is true in this case (see \cite{bpFeynmanKac}). On the finite-dimensional approximation side, one first has to replace the spaces $H_{x;\tau}(\R^n)$ by their curved analogs. For a compact Riemannian manifold $M$ and a partition $\tau=\{0 = \tau_0 < \tau_1 < \dots < \tau_N = t\}$, we define
\begin{equation*}
  H_{x;\tau}(M) := \bigl\{ \gamma \in C([0, t], M) \mid \gamma(0) = x, \gamma|_{(\tau_{j-1}, \tau_j)}~\text{is a geodesic for}~j=1, \dots, N\bigr\}.
\end{equation*}
This is then a smooth manifold of dimension $nN$, where the tangent space at a path $\gamma$ consists of the continuous vector fields $X$ along $\gamma$ with $X(0) = 0$ and such that $X|_{(\tau_{j-1}, \tau_j)}$ is a Jacobi field. If $M$ is flat, the two metrics
\begin{equation} \label{ContinuousH1}
  (X, Y)_{H^1} := \int_0^t \bigl\langle \nabla_s X(s), \nabla_s Y(s)\bigr\rangle \dd s
\end{equation}
and its discretized analog
\begin{equation} \label{DiscreteH1}
  (X, Y)_{\Sigma\text{-}H^1} := \sum_{j=1}^N \bigl\langle \nabla_s X(\tau_{j-1}+), \nabla_sY(\tau_{j-1}+) \bigr\rangle \Delta_j\tau
\end{equation}
coincide, but in the general case, they differ. It turns out that the discretized metric is the correct choice for the finite-dimensional approximations: When endowed with this metric, the curved analog of \eqref{PathIntegralPotential} holds, as was proved in \cite{AnderssonDriver} and, in more general setups,  in \cite{bpapproximation} and \cite{LudewigBoundary}. 

More generally, an analog of Corollary~\ref{CorollaryPotential} is true for Laplace-type operators, acting on sections of a vector bundle. If $\V$ is a vector bundle over a compact Riemannian manifold $M$, which is equipped with a fiber metric, we consider differential operators of the form
\begin{equation} \label{LaplaceConnectionPotential}
H = \frac{1}{2}\nabla^*\nabla +V,
\end{equation}
where $\nabla$ is a metric connection on $\V$ and $V$ is a smooth and symmetric endomorphism field of $\V$ (this decomposition is unique).
The corresponding heat equation is then
\begin{equation} \label{GeneralHeatEquation}
 \frac{\partial}{\partial t} u(t, x) + Hu(t, x) = 0, ~~~~~~~~ u(0, x) = u_0(x)
\end{equation}
for sections $u, u_0$ of the bundle $\V$. Such an operator determines a {\em path-ordered exponential} $\mathcal{P}(\gamma) = \mathcal{P}^{\nabla, V}(\gamma)$ along absolutely continuous paths $\gamma: [0, t] \rightarrow M$, defined by $\mathcal{P}(\gamma) = P(t)$, where $P(s) \in \mathrm{Hom}(\V_{\gamma(0)}, \V_{\gamma(s)})$ is the unique solution of the ordinary differential equation
\begin{equation*}
  \nabla_s P(s) = V\bigl(\gamma(s)\bigr) P(s)~~~~~~~~ P(0) = \mathrm{id}.
\end{equation*}
For example, if $H = \frac{1}{2}\nabla^* \nabla$ (i.e.\ the potential is zero), then $\mathcal{P}(\gamma) = [\gamma\|_0^t]$, the parallel transport along $\gamma$ and if $H= \frac{1}{2}\Delta + V$, the Laplace-Beltrami operator with a potential, then the corresponding $\mathcal{P}(\gamma)$ equals the Feynman-Kac integrand in \eqref{FeynmanKac}. The result is now the following.

\begin{theorem}[Vector-valued Heat Equation] \label{ThmPathIntegralManifold}
Let $H$ be a self-adjoint Laplace type operator, acting on sections of a metric vector bundle $\V$ over a compact Riemannian manifold and let $\mathcal{P}(\gamma)$ be the path-ordered exponential determined by $L$ as above. Then the solution of the corresponding heat equation \eqref{GeneralHeatEquation} is given by
\begin{equation*}
  u(t, x) = \lim_{|\tau|\rightarrow 0} \fint_{H_{x;\tau}(M)} \exp\bigl({-S_0(\gamma)}\bigr) \mathcal{P}(\gamma)^{-1} u\bigl(\gamma(t)\bigr) \dd^{\Sigma\text{-}H^1} \gamma,
\end{equation*}
where the limit goes over any sequence of partitions of the interval $[0, t]$, the mesh of which goes to zero, and the slash over the integral sign denotes divison by $(2\pi)^{\mathrm{dim}(H_{x;\tau}(M))/2}$. This result is true for $u_0$ in any of the spaces $C^0(M, \V)$ or $L^p(M, \V)$ with $1 \leq p < \infty$, with the convergence is in the respective space.
\end{theorem}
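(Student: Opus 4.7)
The plan is to reduce Theorem~\ref{ThmPathIntegralManifold} to the vector-valued Feynman-Kac-Itô formula for the semigroup $e^{-tH}$, namely
\[ (e^{-tH} u_0)(x) = \E_x\bigl[ \mathcal{P}(X)^{-1} u_0(X_t) \bigr], \]
where $X$ denotes Brownian motion on $M$ starting at $x$ and $\mathcal{P}(X)$ is the stochastically defined path-ordered exponential (see \cite{bpFeynmanKac}). Once this is granted, the strategy parallels the proof of Corollary~\ref{CorollaryPotential}: rewrite the finite-dimensional integral over $H_{x;\tau}(M)$ as the expectation of a continuous cylinder functional against an approximate Wiener measure on $C([0,t], M)$, then take $|\tau| \to 0$.

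The first step is to parameterize $H_{x;\tau}(M)$ via the evaluation map $\gamma \mapsto (x_1, \dots, x_N) := (\gamma(\tau_1), \dots, \gamma(\tau_N))$, which is a diffeomorphism away from a measure-zero set (corresponding to conjugate points between successive nodes). Since each segment of $\gamma$ is a geodesic, $|\dot\gamma|$ is constant on it, giving
\[ S_0(\gamma) = \frac{1}{2}\sum_{j=1}^N \frac{d(x_{j-1}, x_j)^2}{\Delta_j\tau}. \]
The key calculation is that the Jacobian of this change of coordinates with respect to the discretized metric \eqref{DiscreteH1} is a product of Jacobi-field determinants, one per segment, and that these combine with the prefactor $(2\pi)^{-nN/2}$ and the Gaussian exponent to produce
\[ \fint_{H_{x;\tau}(M)} e^{-S_0(\gamma)} F(\gamma) \, \dd^{\Sigma\text{-}H^1}\gamma = \int_{M^N} F(\gamma_{x,\tau}) \prod_{j=1}^N \hat K_{\Delta_j\tau}(x_{j-1}, x_j) \, \dd x_1 \cdots \dd x_N, \]
where $\gamma_{x,\tau}$ is the piecewise geodesic through $x_0 = x, x_1, \dots, x_N$ and $\hat K_s(x, y)$ is a short-time Gaussian approximation of the genuine heat kernel of $M$, agreeing with it to leading order. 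This is precisely the point where the discretized metric is forced on us rather than the continuous $H^1$.

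The second step is to insert $F(\gamma) := \mathcal{P}(\gamma)^{-1} u_0(\gamma(t))$ into this identity and pass to the limit. The consistent family $\dd\hat{\mathbb{W}}_{x,\tau} := \prod_j \hat K_{\Delta_j\tau}(x_{j-1}, x_j) \, \dd x_1 \cdots \dd x_N$ approximates the Wiener measure $\mathbb{W}_x$ on $C([0,t], M)$ weakly, and the manifold analog of Lemma~\ref{LemmaPointwiseConvergence} shows that for $\mathbb{W}_x$-almost every continuous path $X$ the piecewise geodesic interpolant $X_{x,\tau}$ converges uniformly to $X$. A Wong-Zakai type stability result for the ODE defining $\mathcal{P}$, combined with the uniform bound $\|\mathcal{P}(\gamma_{x,\tau})^{-1}\| \leq \exp(t\|V\|_\infty)$ from Gronwall, then upgrades this to $\mathcal{P}(X_{x,\tau})^{-1} \to \mathcal{P}(X)^{-1}$ almost surely. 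Dominated convergence and the Feynman-Kac-Itô formula yield the pointwise claim. Since the Gronwall bound is uniform in $x$ and $\tau$ and $M$ is compact, the pointwise convergence in $x$ promotes to convergence in $C^0(M, \V)$ and in $L^p(M, \V)$ for $1 \leq p < \infty$ by dominated convergence and Minkowski's integral inequality.

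The two main obstacles are the following. First, verifying that the Jacobi-field determinants arising from the $\Sigma\text{-}H^1$ volume form combine with $(2\pi)^{nN/2}$ to form a bona fide approximate heat kernel on $M$ requires a careful curvature expansion, because the naive Gaussian $(2\pi\Delta_j\tau)^{-n/2} \exp(-d(x_{j-1}, x_j)^2/(2\Delta_j\tau))$ differs from $K_{\Delta_j\tau}$ by an $O(\Delta_j\tau)$ curvature-dependent factor whose cumulative effect over the $N \sim |\tau|^{-1}$ segments must cancel. Second, making rigorous sense of $\mathcal{P}(X)^{-1}$ along the non-differentiable Brownian path and proving stability of the piecewise geodesic approximation to it is essentially a Stratonovich/Wong-Zakai theorem for a bundle-valued ODE driven by a rough path; this is where the bulk of the technical work in \cite{AnderssonDriver, bpapproximation, LudewigBoundary} lies.
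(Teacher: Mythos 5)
Your proposal follows the stochastic-analysis route of Andersson--Driver, which the paper cites as the original proof, rather than the Chernoff-theorem route that the paper actually elaborates: there one defines the one-step operators $P_t$ as in \eqref{ProperFamily}, checks that they form a Chernoff family with infinitesimal generator $-H$, and observes that $P_{\Delta_1\tau}\cdots P_{\Delta_N\tau}$ reproduces the integral over $H_{x;\tau}(M)$. The Chernoff route buys you the $C^0$ and $L^p$ statements for free (convergence holds in the strong operator topology of whichever Banach space the family acts on) and avoids any stochastic machinery; your route, if completed, gives a probabilistic interpretation and connects directly to the Feynman--Kac--It\^o formula. Your first step (the pushforward of $e^{-S_0}\,\dd^{\Sigma\text{-}H^1}\gamma$ under $\mathrm{ev}_\tau$ being a product of Gaussian factors times Jacobians $J(x_{j-1},x_j)^{-1}$ of exponential maps) is correct and is exactly the computation underlying \eqref{DiscreteVolume}.

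However, two steps are asserted where the real work lies, and one of them is not merely technical. You claim that $\dd\hat{\mathbb{W}}_{x,\tau} = \prod_j \hat K_{\Delta_j\tau}(x_{j-1},x_j)\,\dd x_1\cdots\dd x_N$ ``approximates the Wiener measure weakly'' and that the argument then ``parallels the proof of Corollary~\ref{CorollaryPotential}.'' It does not parallel it: the proofs of Proposition~\ref{PropWienerConvergence} and Corollary~\ref{CorollaryPotential} rest entirely on the identity \eqref{WienerCharacterization}, which holds because in the flat case the polygon measure is the \emph{exact} pushforward of $\mathbb{W}_x$ under the discretization map $\gamma\mapsto\gamma_{x,\tau}$ --- the finite-dimensional marginals are built from the true heat kernel. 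Here $\hat K_s$ is only an approximation of $K_s$, the measures $\hat{\mathbb{W}}_{x,\tau}$ are not consistent marginals of $\mathbb{W}_x$, and the discrepancy $\hat K_s/K_s = 1 + O(s) + O(d(x,y)^2)$ accumulates over $N\sim|\tau|^{-1}$ factors into an a priori $O(1)$ effect. Showing that this effect tends to $1$ (which, as in the paper's Section~4, involves recognizing the sum $\sum_j \mathrm{ric}(\Delta_j\gamma,\Delta_j\gamma)$ as a quadratic variation converging to $\int_0^t\mathrm{scal}$, with a cancellation against the $J^{-1}$ versus $J^{-1/2}$ discrepancy) is the theorem; you have labelled it an ``obstacle'' but supplied no mechanism for the cancellation. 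Separately, your final claim that pointwise convergence in $x$ plus a uniform Gronwall bound ``promotes to convergence in $C^0(M,\V)$'' is false as stated: pointwise convergence of uniformly bounded functions on a compact space does not imply uniform convergence. You would need equicontinuity of the approximants in $x$, or else the Chernoff/semigroup formulation, to get the $C^0$ statement.
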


\begin{example}[Quantizing Hamiltonian Functions] \label{ExampleQuantization}
In the classical mechanics of point particles, one considers Hamiltonian functions on phase space, which are smooth functions on the cotangent bundle of a Riemannian manifold $M$ (we assume it to be compact here). A typical electromagnetic Hamiltonian function is of the form
\begin{equation} \label{StandardHamiltonian}
  h(x, p) = |p - \omega(x)|^2 + V(x), ~~~~~~~ x \in M, ~~p \in T^*_xM
\end{equation}
where $\omega \in \Omega^1(M)$ is a given one-form and $V \in C^\infty(M)$ is a potential. The corresponding quantum mechanical Hamiltonian is the operator $H = \frac{1}{2}\nabla^*\nabla + V$, where $\nabla = d + i\omega$ is the connection determined by $\omega$. The corresponding time evolution operator is $e^{itH}$, which we cannot deal with; however, the "wick-rotated" solution operator $e^{-tH}$ {\em can} be represented by a path integral as follows. It involves the Lagrange function 
\begin{equation*}
  \ell(x, v) = \frac{1}{2}|v|^2 + \omega(x)\cdot v - V(x), ~~~~~~~ x \in M, ~~ v \in T_x M
\end{equation*}
associated to the Hamiltonian function $h$. Namely, the term $\mathcal{P}(\gamma)^{-1}$ can be computed explicitly for our particular Hamiltonian $H$, namely
\begin{equation*}
  \exp\bigl({-S_0(\gamma)}\bigr) \mathcal{P}(\gamma)^{-1}  = \exp \left( \int_0^t \Bigl( -\frac{1}{2} \bigl|\dot{\gamma}(s)\bigr|^2 + i \omega\bigl(\gamma(s)\bigr) \dot{\gamma}(s) - V\bigl(\gamma(s)\bigr) \Bigr)\dd s \right).
\end{equation*}
We obtain the path integral formula
\begin{equation} \label{ConfigurationSpace}
  u(t, x) = \lim_{|\tau|\rightarrow 0} \fint_{H_{x;\tau}(M)} \exp \left( \int_0^t \ell\bigl(\gamma(s), i\dot{\gamma}(s)\bigr)\dd s\right) u\bigl(\gamma(t)\bigr)\dd^{\Sigma\text{-}H^1} \gamma,
\end{equation}
where we extended $\ell$ to a fiber-wise polynomial on $TM \otimes \C$. Notice the imaginary unit in the Lagrangian; it is due to the fact that we substituted $t \mapsto -it$ in order to be able to use our results.

\eqref{ConfigurationSpace} is often called {\em configuration space path integral}, since the paths run in configuration space $M$ rather than in momentum space $T^*M$. We can also represent $u(t, x)$ by a {\em momentum space path integral}, which is an integral over the space $\Omega_{x;\tau}(M)$ of tuples $(\gamma, \xi)$, where $\gamma \in H_{x;\tau}(M)$ and $\xi$ is a (non-continuous) section of $L^2([0, t], \gamma^*T^*M)$ that is parallel on each subinterval $[\tau_{j-1}, \tau_j]$. $\Omega_{x;\tau}(M)$ is naturally a vector bundle over $H_{x;\tau}(M)$ which carries the $L^2$ metric
\begin{equation*}
  (\xi, \eta)_{L^2} := \int_0^t \bigl\langle \xi(s), \eta(s) \bigr\rangle \dd s
\end{equation*}
induced from $L^2([0, t], \gamma^*TM)$. Elementary calculations with Gaussian integrals now show that
\begin{equation*}
  u(t, x) = \lim_{|\tau|\rightarrow 0} \fint_{\Omega_{x;\tau}(M)} \exp \left( \int_0^t \Bigl[ i \xi(s) \cdot \dot{\gamma}(s) - h\bigl(\gamma(s), \xi(s)\bigr) \Bigr]\dd s\right) u\bigl(\gamma(t)\bigr)\dd (\gamma, \xi),
\end{equation*}
where we integrate with respect to the induced metric on the total space $\Omega_{x;\tau}(M)$ and the slash over the integral sign denotes division by $(2 \pi)^{\dim(\Omega_{x;\tau}(M))/2} = (2 \pi)^{\dim(H_{x;\tau}(M))}$. Notice that in this formulation, the Hamiltonian appears instead of the Lagrangian.
\end{example}

\begin{example}[The Dirac Operator on Spinors]
  Let $M$ be a spin manifold and denote by $\slashed{D}$ the Dirac operator, acting on spinors. Then the famous Lichnerowicz formula asserts
  \begin{equation*}
    \slashed{D}^2 = \nabla^*\nabla + \frac{1}{4} \mathrm{scal},
  \end{equation*} 
  where $\nabla$ is the metric connection on spinors induced by the Levi-Civita connection. Hence the solution $\psi(t, x)$ to the heat equation
  \begin{equation*}
   \frac{\partial}{\partial t} \psi(t, x) + \frac{1}{2}\slashed{D}^2\psi(t, x) = 0, ~~~~~~~\psi(0, x) = \psi_0(x)
  \end{equation*}
  on spinors is given by
  \begin{equation*}
    \psi(t, x) = \lim_{|\tau|\rightarrow 0} \fint_{H_{x;\tau}(M)} \exp\left(-S_0[\gamma] - \frac{1}{8} \int_0^t \mathrm{scal}\bigl(\gamma(s)\bigr) \dd s \right) [\gamma\|_0^t]^{-1} \psi_0\bigl(\gamma(t)\bigr)\dd^{\Sigma\text{-}H^1}\gamma,
  \end{equation*}
  where $[\gamma\|_0^t]$ denotes the parallel transport  with respect to the Levi-Civita connection on spinors.
  \end{example}

There are various ways to prove Thm.~\ref{ThmPathIntegralManifold}. The original proof by Andersson and Driver relies on techiques from stochastic analysis: An essential step is the observation that the {\em development map} sending piecewise polygon paths in $T_xM$ to the piecewise geodesic paths in $H_{x;\tau}(M)$ is measure preserving (see \cite{AnderssonDriver}, Def.~2.2 and Thm.~4.10); the result is then proven conceptually similar to Prop.~\ref{PropWienerConvergence} (compare Thm.~4.14 in \cite{AnderssonDriver}).

The proof by B\"ar and Pf\"affle (see \cite{bpapproximation}) essentially uses Chernoff's theorem, which asserts that if $P_t$, $t \in [0, \infty)$ is a uniformly bounded family of linear operators on a Banach spaces with $P_0 = \id$ which satisfies $\|P_t\| = O(t)$ and possesses an infinitesimal generator $-H$ (where the infinitesimal generator is defined just as in the case of an ordinary semigroup of operators), then one has
\begin{equation*}
  \lim_{|\tau|\rightarrow 0} P_{\Delta_1\tau} \cdots P_{\Delta_N\tau} = e^{-tH},
\end{equation*}
where the limit goes over any sequence of partitions of the interval $[0, t]$, the mesh of which tends to zero and holds in the strong operator topology (see \cite[Thm.~2.8]{bpapproximation}, Prop.~1 in \cite{WeizsaeckerSmolyanov05} or \cite{chernoff}). If now $\tau^t := \{0 = \tau_0^t < \tau_1^t = t\}$ denotes the trivial partition of the interval $[0, t]$, one verifies that on the one hand,
\begin{equation} \label{ProperFamily}
  (P_t u)(x) := \fint_{H_{x;\tau^t}(M)} \exp\bigl({-S_0(\gamma)}\bigr) \mathcal{P}(\gamma)^{-1} u\bigl(\gamma(t)\bigr) \dd^{\Sigma\text{-}H^1} \gamma
\end{equation}
satisfies the assumptions of Chernoff's theorem with $-H= -(\frac{1}{2}\nabla^*\nabla + V)$ as infinitesimal generator, and on the other hand the product $P_{\Delta_1 \tau} \cdots P_{\Delta_N\tau} u$ is equal to the path integral over $H_{x;\tau}(M)$ appearing in Thm.~\ref{ThmPathIntegralManifold}, for any partition $\tau = \{0 = \tau_0 < \tau_1 < \dots < \tau_N = t\}$. B\"ar and Pf\"affle  do this using the short-time asymptotic expansion of the heat kernel; another route is to to rewrite the right hand side of \eqref{ProperFamily} as an integral over $T_x M$ and then do an appropriate time-rescaling which brings into play the geodesic flow on the tangent bundle (see \cite{LudewigBoundary}).

The advantage of the latter approach is that it readily generalizes to the case that $M$ is a compact manifold with boundary. In this case, one replaces the spaces $H_{x;\tau}(M)$ of piecewise geodesics with the spaces $H^{\mathrm{refl}}_{x;\tau}(M)$ of {\em reflected} piecewise geodesics. Here a reflected geodesic is a path $\gamma: [a, b] \rightarrow M$ that is a geodesic except at finitely many points $s \in [a, b]$, where $\gamma$ reflects at the boundary with angle of incidence equal to angle of reflection. One can treat a variety of different boundary conditions using this approach, including Dirichlet- and Neumann boundary conditions (however, Robin boundary conditions do not belong to this class). For a general presentation, we refer to \cite{LudewigBoundary} or \cite[Section~2]{LudewigThesis}; here we only give the results in the scalar case.

\begin{theorem}[Manifolds with Boundary]
Let $M$ be a compact Riemannian manifold with boundary, $V \in C^\infty(M)$ and let $u(t, x)$ be a solution to the heat equation with potential \eqref{HeatEquationWithPotential} with Dirichlet or Neumann boundary conditions. Then we have
\begin{equation*}
  u(t, x) = \lim_{|\tau|\rightarrow 0} \fint_{H^{\mathrm{refl}}_{x;\tau}(M)} \exp\bigl(-S[\gamma]\bigr) u_0\bigl(\gamma(t)\bigr) \dd^{\Sigma\text{-}H^1} \gamma
\end{equation*}
in the Neumann case and
\begin{equation*}
  u(t, x) = \lim_{|\tau|\rightarrow 0} \fint_{H^{\mathrm{refl}}_{x;\tau}(M)} \exp\bigl(-S[\gamma]\bigr) u_0\bigl(\gamma(t)\bigr) \,(-1)^{\mathrm{refl}(\gamma)} \dd^{\Sigma\text{-}H^1} \gamma
\end{equation*}
in the Dirichlet case. Here the action $S$ is given by \eqref{ActionWithPotential} and $\mathrm{refl}(\gamma)$ denotes the number of boundary reflections of the path $\gamma \in H^{\mathrm{refl}}_{x;\tau}(M)$. As before, these results hold in the case that $u_0$ is in any of the spaces $C^0(M)$ or $L^p(M)$, $1 \leq p < \infty$, with convergence in the respective space.
\end{theorem}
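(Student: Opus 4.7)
The plan is to follow the Chernoff-theorem strategy sketched above for Thm.~\ref{ThmPathIntegralManifold}, replacing spaces of piecewise geodesics by their reflected analogs and encoding the boundary condition through the sign factor $(\pm 1)^{\mathrm{refl}(\gamma)}$. Concretely, for the trivial partition $\tau^t = \{0 = \tau_0^t < \tau_1^t = t\}$ I would define one-step operators
\[
(P_t^{\pm} u_0)(x) := \fint_{H^{\mathrm{refl}}_{x;\tau^t}(M)} \exp\bigl(-S[\gamma]\bigr)\, (\pm 1)^{\mathrm{refl}(\gamma)}\, u_0\bigl(\gamma(t)\bigr)\, \dd^{\Sigma\text{-}H^1}\gamma,
\]
taking the sign $+$ in the Neumann case and $-$ in the Dirichlet case.

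Next I would establish a concatenation identity: for any partition $\tau = \{0 = \tau_0 < \dots < \tau_N = t\}$, an element of $H^{\mathrm{refl}}_{x;\tau}(M)$ restricts to a reflected geodesic on each subinterval $[\tau_{j-1},\tau_j]$, while $S[\gamma]$, the discretized metric $(\cdot,\cdot)_{\Sigma\text{-}H^1}$, and $\mathrm{refl}(\gamma)$ all split additively (multiplicatively, for the sign) over subintervals. Fubini then yields
\[
\bigl(P_{\Delta_1\tau}^{\pm}\cdots P_{\Delta_N\tau}^{\pm} u_0\bigr)(x) = \fint_{H^{\mathrm{refl}}_{x;\tau}(M)} \exp\bigl(-S[\gamma]\bigr)\, (\pm 1)^{\mathrm{refl}(\gamma)}\, u_0\bigl(\gamma(t)\bigr)\, \dd^{\Sigma\text{-}H^1}\gamma,
\]
so that, once the hypotheses of Chernoff's theorem are verified for $P_t^{\pm}$, the conclusion follows from \cite[Thm.~2.8]{bpapproximation} in $C^0(M)$ and in each $L^p(M)$. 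To verify those hypotheses I would, following \cite{LudewigBoundary}, parametrize reflected geodesics by their initial velocity to rewrite $(P_t^{\pm} u_0)(x)$ as an integral over $T_xM$, then rescale $v = \sqrt{t}\, w$ and expand in $t$; away from $\partial M$ no reflection occurs for small $t$ and the interior calculation of \cite{bpapproximation} recovers the generator $-H$ with $H = \frac{1}{2}\Delta + V$, while near $\partial M$ the reflecting exponential map reproduces the half-space image method.

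The main obstacle lies in precisely this boundary analysis: one must show that \emph{unsigned} integration over reflected paths selects the Neumann extension of $H$ as its infinitesimal generator, whereas the alternating sign forces the Dirichlet extension and in particular imposes a vanishing boundary trace. Morally this is the familiar image method, but on a curved manifold it requires working in Fermi coordinates around $\partial M$, controlling the Jacobian of the reflecting exponential map on the open dense set where it is smooth, and obtaining uniform $t$-independent operator norm bounds on $P_t^{\pm}$ that are not needed in the boundaryless setting. Once these technical points are in place, Chernoff's theorem delivers the claimed convergence in the respective function space.
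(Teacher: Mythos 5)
Your proposal follows essentially the same route the paper indicates for this theorem: the Chernoff-theorem strategy of Thm.~\ref{ThmPathIntegralManifold} in its second variant (rewriting the one-step operator as an integral over $T_xM$ and time-rescaling), carried over to reflected piecewise geodesics with the sign $(\pm 1)^{\mathrm{refl}(\gamma)}$ selecting the Neumann or Dirichlet extension, which is precisely the approach of \cite{LudewigBoundary} that the paper cites. The concatenation identity, the verification of Chernoff's hypotheses via the reflecting exponential map, and the image-method analysis near $\partial M$ that you flag as the main technical work are exactly the ingredients of that reference, so your outline is correct and matches the paper's intended argument.
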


Let us finish this section with a quick discussion about what happens if one takes other metrics than the discrete $H^1$ metric \eqref{DiscreteH1} on $H_{x;\tau}(M)$. Initially, it may seem that the continuous version \eqref{ContinuousH1} is the more natural choice; however, there are results that indicate otherwise. Namely, Lim shows (under further curvature assumptions) that for a continuous function $f$ on $C([0, t], M)$, one has
\begin{equation*}
  \lim_{|\tau|\rightarrow 0} \fint_{H_{x;\tau}(M)} \exp\bigl(-S_0[\gamma]\bigr) f(\gamma) \dd^{H^1}\gamma = \int_{C([0, t], M)} f(\gamma) \det \bigl(\id + K_\gamma\bigr) \dd \mathbb{W}_x(\gamma),
\end{equation*}
where $\mathbb{W}_x$ is the (curved) Wiener measure for paths starting at $x$ and $K_\gamma$ is a certain trace-class integral operator on $L^2([0, t], \gamma^*TM)$ which is given in terms of the curvature along $\gamma$ (see Def.~1.13 and Thm.~1.14 in \cite{LimPathIntegrals}). In summary, path integral formulas become considerably more complicated in this case.

One can also consider the $L^2$ scalar product
\begin{equation} \label{L2Metric}
  (X, Y)_{L^2} = \int_0^1 \bigl\langle X(s), Y(s) \bigr\rangle \dd s
\end{equation}
or its discretized version
\begin{equation} \label{DiscreteL2}
  (X, Y)_{\Sigma\text{-}L^2} = \sum_{j=1}^N \bigl\langle X(\tau_j), Y(\tau_j)\bigr\rangle \Delta_j\tau
\end{equation}
on $H_{x;\tau}(M)$. In terms of the discrete $L^2$-metric, the solution $u(t, x)$ to the heat equation with potential \eqref{HeatEquationWithPotential} is given by
\begin{equation} \label{L2Approximation}
  u(t, x) = \lim_{|\tau|\rightarrow 0} \prod_{j=1}^N (\Delta_j\tau)^{-n/2} \fint_{H_{x;\tau}(M)} \exp\left(-S[\gamma] - \frac{1}{6}\int_0^t \mathrm{scal}\bigl(\gamma(s)\bigr) \dd s \right) u_0\bigl(\gamma(t)\bigr) \dd^{\Sigma\text{-}L^2}\gamma.
\end{equation}
Hence taking a different Riemannian metric on the path space (hence another volume form) results in a different pre-factor (which in this case also depends on the partition $\tau$ and not only on the dimension of the path space) and also a scalar curvature term appears (this result can be found in \cite{AnderssonDriver} and \cite{bpapproximation}). In the case of the continuous $L^2$ metric, the pre-factor in front of the scalar curvature becomes $(3+2\sqrt{3})/60$ instead of $1/6$ and one gets an again different $\tau$-dependent normalization factor (see Thm.~2.1 in  \cite{Laetsch}).

Conceptually, the $L^2$ metrics are somewhat less natural compared to the $H^1$ metric, since the action functional $S_0$ is not defined on $L^2$. 

\section{Approximation of the Heat Kernel}

Let $M$ be a compact Riemannian manifold. The heat kernel of the operator $\frac{1}{2}\Delta$ of $M$, denoted by $K_t(x, y)$, is characterized by the property that it solves the heat equation \eqref{HeatEquation} in the $x$ variable with the initial condition $\delta_y$, the delta distribution at $y$. For $t>0$, it is smooth in both $x$ and $y$. Analogous to the above, one should be able to write the heat kernel as a path integral, this time over the spaces $H_{xy;\tau}(M)$ of piecewise geodesics that travel from $x$ to $y$.

\begin{definition}
For a partition $\tau = \{0 = \tau_0 < \tau_1 < \dots < \tau_N = t\}$ of the interval $[0, t]$, the space $H_{xy;\tau}(M)$ is the space of paths $\gamma \in H_{x;\tau}(M)$ with $\gamma(t) = y$ such that $\gamma|_{[\tau_{j-1}, \tau_j]}$ is the unique shortest geodesic between its end points for each $j=1, \dots, N$.
\end{definition}

Clearly, via the evaluation map $\mathrm{ev}_\tau: H_{xy;\tau}(M) \rightarrow M^{N-1}$, $\gamma \mapsto (\gamma(\tau_1), \dots, \gamma(\tau_{N-1}))$, maps $H_{xy;\tau}(M)$ injectively onto an open subset of $M^{N-1}$. This determines a manifold structure on $H_{xy;\tau}(M)$ (compare also  \cite[Lemma~16.1]{MilnorMorseTheory}). Let $H_{x;\tau}^\circ(M) \subseteq H_{x;\tau}(M)$ be the subset of paths $\gamma$ such that $\gamma|_{[\tau_{j-1}, \tau_j]}$ is the unique shortest geodesic between its end points for each $j=1, \dots, N$ (it turns out that Thm.~\ref{ThmPathIntegralManifold} also holds when one replaces $H_{x;\tau}(M)$ by $H_{x;\tau}^\circ(M)$, compare \cite[Thm.~5.1]{bpapproximation}). Let us equip the spaces $H_{xy;\tau}(M)$ with a measure $\dd^{\Sigma\text{-}H^1}\gamma$ that satisfies the co-area formula
\begin{equation} \label{CoAreaFormula}
  \int_{H_{x;\tau}^\circ(M)} f(\gamma)\, \dd^{\Sigma\text{-}H^1}\gamma = t^{-n/2} \int_M \int_{H_{xy;\tau}(M)} f(\gamma)\, \dd^{\Sigma\text{-}H^1}\gamma \,\dd y 
\end{equation}
for continuous bounded functions $f$ on $H_{x;\tau}^\circ(M)$. Such a measure is constructed in Section~2.2.1 of \cite{LudewigThesis} and it is explicitly given by (setting $x_0 := x$, $x_N := y$)
\begin{equation} \label{DiscreteVolume}
\begin{aligned}
  &\int_{H_{xy;\tau}(M)} f\bigl(\gamma(\tau_1), \dots, \gamma(\tau_{N-1})\bigr) \dd^{\Sigma\text{-}H^1} \gamma \\
  &~~~~~~:= t^{n/2}\int_{M^{N-1}} f(x_1, \dots, x_{N-1}) \left(\prod_{j=1}^N J(x_{j-1}, x_j)(\Delta_j\tau)^{n/2} \right)^{-1}\dd x_1 \cdots \dd x_{N-1},
\end{aligned}
\end{equation}
where $J(x, y)$ is the Jacobian of the exponential map, given by $J(x, y) = \det(g_{ij}(y))^{1/2}$, with $g_{ij}$ being the coefficients of the metric in Riemannian normal coordinates around $x$.
 For this measure, one then obtains from Thm.~\ref{ThmPathIntegralManifold} that
\begin{equation} \label{HeatKernelConvergence}
  (2\pi t)^{-n/2} \fint_{H_{xy;\tau}(M)} \exp\bigl(-S_0[\gamma]\bigr) \dd^{\Sigma\text{-}H^1} \gamma ~\longrightarrow~ K_t(x, y)
\end{equation}
as $|\tau|\rightarrow 0$, where the convergence holds in the weak-$*$-topology of the space of finite Borel measures on $M$ (with respect to one of the variables). Namely, if we define the approximate heat operator $P_\tau$ on $C^0(M)$ by the formula
\begin{equation*}
  (P_\tau u_0)(x) := \fint_{H_{x;\tau}^\circ(M)}  \exp\bigl(-S_0[\gamma]\bigr) u_0\bigl(\gamma(t)\bigr) \dd^{\Sigma\text{-}H^1}\gamma
\end{equation*} 
for a partition $\tau= \{0 = \tau_0 < \tau_1 < \dots < \tau_N = t\}$ and the approximate heat kernel by
\begin{equation*}
  K_\tau(x, y) := (2\pi t)^{-n/2} \fint_{H_{xy;\tau}(M)} \exp\bigl(-S_0[\gamma]\bigr) \dd^{\Sigma\text{-}H^1},
\end{equation*}
then  for each $x \in M$ and each $u_0 \in C^0(M)$, 
\begin{equation*}
  \bigl(K_\tau(x, -), u_0\bigr)_{L^2} \stackrel{ \eqref{CoAreaFormula} }{=} (\delta_x, P_\tau u_0)_{L^2} \stackrel{\text{Thm.~\ref{ThmPathIntegralManifold}}}{\longrightarrow} (\delta_x, e^{-t\Delta} u_0)_{L^2} = \bigl(K_t(x, -), u_0\bigr)_{L^2}.
\end{equation*}
Of course, this result is not at all satisfactory, and one would like the convergence \eqref{HeatKernelConvergence} to hold pointwise. A partial result can be found in \cite{bpapproximation}, who prove at least that there exists some sequence $\tau^{(k)}$ of partitions with $|\tau^{(k)}| \rightarrow 0$ for which \eqref{HeatKernelConvergence} holds pointwise uniformly for $(x, y) \in M\times M$. This result follows from  Thm.~\ref{ThmPathIntegralManifold} with a little more effort. However, even the following stronger result is true \cite[Thm.~2.2.7]{LudewigThesis}.

\begin{theorem}[Heat Kernel Approximation] \label{ThmHeatKernelPathIntegral}
  Let $M$ be a compact Riemannian manifold and let $K_t^H(x, y)$ be the heat kernel of the operator $H = \frac{1}{2} \Delta + V$. Then we have
  \begin{equation*}
    K_t^H(x, y) = \lim_{|\tau|\rightarrow 0} (2\pi t)^{-n/2}\fint_{H_{xy;\tau}(M)} \exp\bigl(-S[\gamma]\bigr) \dd^{\Sigma\text{-}H^1} \gamma,
  \end{equation*}
  where $S$ is the action defined in \eqref{ActionWithPotential} and the slash over the integral sign denotes division by $(2 \pi)^{\dim(H_{xy;\tau}(M))/2}$. The convergence is uniform over $M\times M$.
\end{theorem}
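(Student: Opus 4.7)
The strategy is to upgrade the weak convergence $\int_M K_\tau(x,y)\, u_0(y)\,\dd y \to \int_M K_t^H(x,y)\, u_0(y)\,\dd y$ (for each $u_0 \in C(M)$ and each $x \in M$), which follows directly from Theorem~\ref{ThmPathIntegralManifold} together with the coarea formula \eqref{CoAreaFormula} as sketched in the paragraphs preceding the theorem, to uniform convergence of the kernels themselves on $M\times M$. The plan is an Arzel\`a--Ascoli compactness argument: once the family $\{K_\tau\}_{|\tau|\leq\delta_0}$ is shown to be uniformly bounded and equicontinuous on $M\times M$, every sequence $\tau^{(k)}$ with $|\tau^{(k)}|\to 0$ admits a uniformly convergent subsequence, whose limit, by the weak convergence above, must equal $K_t^H$; hence the full family converges uniformly to $K_t^H$.

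The uniform bound is obtained from the explicit description \eqref{DiscreteVolume} by writing
\[
K_\tau(x,y) \;=\; \int_{M^{N-1}} \prod_{j=1}^N \tilde k_{\Delta_j\tau}(x_{j-1},x_j)\, \exp\!\Bigl(-\!\int_0^t \!V(\gamma(s))\,\dd s\Bigr)\,\dd x_1 \cdots \dd x_{N-1},
\]
with $x_0 = x$, $x_N = y$, where the one-step factor is $\tilde k_s(a,b) = (2\pi s)^{-n/2} J(a,b)^{-1} \exp\!\bigl(-d(a,b)^2/(2s)\bigr)$. On the compact manifold $M$, standard distance comparison yields universal constants $C_0, c_0 > 0$ with $\tilde k_s(a,b) \leq C_0 s^{-n/2} \exp\!\bigl(-c_0 d(a,b)^2/s\bigr)$ for $s \leq s_0$; iterated Chapman--Kolmogorov convolution of these Gaussian majorants (absorbing the bounded contribution of $V$) collapses to a global bound $K_\tau(x,y) \leq C(t)\, t^{-n/2} \exp\!\bigl(-c(t)\,d(x,y)^2/t\bigr)$ that is uniform for $|\tau|\leq\delta_0$.

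For equicontinuity, pick an intermediate node $\tau_k \in [t/3,\, 2t/3]$ and split $\tau = \tau^L \cup \tau^R$ into partitions of $[0,\tau_k]$ and $[\tau_k,t]$. Since $P_\tau = P_{\tau^L}\circ P_{\tau^R}$, the kernels factor as $K_\tau(x,y) = \int_M K_{\tau^L}(x,z)\, K_{\tau^R}(z,y)\,\dd z$. The factor $K_{\tau^L}$ corresponds to total time $\geq t/3$, and I would argue that the iterated convolution smooths it uniformly in the subpartition, i.e.\ $|\nabla_x K_{\tau^L}(x,z)| \leq C'(t) \exp\!\bigl(-c(t)\, d(x,z)^2/t\bigr)$ with constants depending only on $M$, $V$, $t$. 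Combined with the Gaussian bound on $K_{\tau^R}(\cdot,y)$ from the previous paragraph, this gives equicontinuity of $K_\tau$ in the first variable, and a mirror splitting near $t-\tau_k$ gives the same estimate in the second.

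With these two bounds, Arzel\`a--Ascoli yields, along any sequence $|\tau^{(k)}|\to 0$, a subsequence for which $K_{\tau^{(k_l)}}\to\tilde K$ uniformly on $M\times M$. Uniform convergence gives $\int_M K_{\tau^{(k_l)}}(x,y)\,u_0(y)\,\dd y \to \int_M \tilde K(x,y)\,u_0(y)\,\dd y$ for every $u_0 \in C(M)$, while the coarea formula \eqref{CoAreaFormula} combined with Theorem~\ref{ThmPathIntegralManifold} identifies the same integral with $(P_{\tau^{(k_l)}}u_0)(x) \to (e^{-tH}u_0)(x) = \int_M K_t^H(x,y)u_0(y)\,\dd y$; continuity in $y$ then forces $\tilde K(x,y) = K_t^H(x,y)$ pointwise on $M\times M$. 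The substantive obstacle is the gradient estimate on $K_{\tau^L}$ with constants depending only on the macroscopic time $t$: the individual factors $\tilde k_{\Delta_j\tau}$ have gradients of size $\sim (\Delta_j\tau)^{-(n+1)/2}$ that blow up as $\Delta_j\tau\to 0$, so the required smoothing lives in the compound convolution and must be extracted either via integration by parts (shifting the $x$-derivative of $\tilde k_{\Delta_1\tau}$ onto subsequent factors and iterating) or via a direct Gaussian derivative estimate for the whole iterated kernel, carefully tracking how the time budget $\tau_k$ feeds the smoothing.
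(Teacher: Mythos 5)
Your route---weak convergence from Theorem~\ref{ThmPathIntegralManifold} plus the co-area formula \eqref{CoAreaFormula}, upgraded to uniform convergence by Arzel\`a--Ascoli---is genuinely different from the paper's, which instead proves the quantitative comparison Theorem~\ref{ThmConvolutionApproximation0}, applies it to $k_t = K^H_t$ and an explicit one-step kernel $\ell_t$, identifies the iterated convolution of the $\ell$'s with the path integral \eqref{ConvolutionPathIntegral}, and finally removes the correction factor $F_\tau$ by a quadratic-variation argument. However, your proposal has a genuine gap at its central step, the equicontinuity of $\{K_\tau\}$ uniformly in the partition. The splitting at an intermediate node $\tau_k$ does not help: $K_{\tau^L}(x,z)$ is itself an iterated convolution whose \emph{first} factor $\tilde k_{\Delta_1\tau}(x,\cdot)$ varies on the scale $\sqrt{\Delta_1\tau}$, so its regularity in $x$ is exactly as problematic as that of $K_\tau$; the macroscopic time budget $\tau_k\geq t/3$ buys nothing for the variable sitting at the singular end of the convolution chain. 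The asserted bound $|\nabla_x K_{\tau^L}(x,z)|\leq C'(t)\exp\bigl(-c(t)d(x,z)^2/t\bigr)$, uniform in the subpartition, is precisely the hard analytic content of the theorem. You flag it honestly, but the proposed repairs (iterated integration by parts shifting the derivative down the chain, or a direct derivative estimate for the compound kernel) each generate $N$ error terms whose accumulation as $|\tau|\to 0$ must be controlled---this is a statement of at least the same difficulty as Theorem~\ref{ThmConvolutionApproximation0}, and arguably harder since it concerns derivatives rather than values. Note that the paper records that even convergence along \emph{some} sequence of partitions already takes ``a little more effort'' beyond Theorem~\ref{ThmPathIntegralManifold}; the all-sequences, uniform statement is exactly what forced the quantitative approach.

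There is also a quantitative flaw in the uniform-boundedness step. Gaussian majorants $C_0 s^{-n/2}\exp\bigl(-c_0 d(a,b)^2/s\bigr)$ with $c_0<1/2$ do \emph{not} reproduce under convolution with uniform constants: already on $\R^n$ one has $\int_{\R^n} s^{-n/2}e^{-c_0|x-z|^2/s}\,r^{-n/2}e^{-c_0|z-y|^2/r}\,\dd z = (\pi/c_0)^{n/2}(s+r)^{-n/2}e^{-c_0|x-y|^2/(s+r)}$, so $N$-fold convolution costs a factor $(\pi/c_0)^{n(N-1)/2}\to\infty$ as $|\tau|\to 0$. The bound only closes if one majorizes by a sub-multiplicative family, i.e.\ by $e^{\gamma_1 t+\gamma_2 d(x,y)^2}K_t(x,y)$ with $K_t$ the exact heat kernel---which is exactly hypothesis \eqref{Estimate1} of Theorem~\ref{ThmConvolutionApproximation0} together with the convolution lemma underlying its proof. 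So both pillars of your compactness argument secretly require the quantitative machinery the paper builds; the soft route does not avoid it. (Your identification of the subsequential limit with $K^H_t$ via Theorem~\ref{ThmPathIntegralManifold} and \eqref{CoAreaFormula} is correct as far as it goes.)
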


This is result was proved be the author using heat kernel approximation and actually implies Thm.~\ref{ThmPathIntegralManifold}: The latter follows from integrating over the $y$ variable and using the co-area formula \eqref{CoAreaFormula}. The method is inspired by the paper \cite{baerrenormalization}, where a similar result is given for the discretized $L^2$-metric \eqref{L2Metric} on $H_{xy;\tau}(M)$ (the restriction of which to $H_{xy;\tau}(M)$ trivially satisfies the corresponding co-area formula \eqref{CoAreaFormula} with $L^2$ replaced by $H^1$). One obtains that  $K_t^H(x, y)$ is also given by
\begin{equation*}
  \lim_{|\tau|\rightarrow 0}  (2\pi t)^{-n/2} \prod_{j=1}^N (\Delta_j\tau)^{-n/2} \fint_{H_{xy;\tau}(M)} \exp\left(-S[\gamma] - \frac{1}{6} \int_0^t \mathrm{scal}\bigl(\gamma(s)\bigr) \dd s \right) \dd^{\Sigma\text{-}L^2} \gamma,
\end{equation*}
analogous to \eqref{L2Approximation}. The idea is to use the Markhov property of the heat kernel,
\begin{equation} \label{MarkhovProperty}
  K_t^H(x, y) = \bigl(K_{\Delta_1\tau}^H * \cdots * K_{\Delta_N\tau}^H\bigr)(x, y)
\end{equation}
for any partition $\tau=\{0 = \tau_0 < \tau_1 < \dots < \tau_N = t\}$ of the interval $[0, t]$, and then to replace the kernels $K_{\Delta_j\tau}$ by suitable approximations to discover a path integral on the right hand side. Here, for two kernels $k(x, y)$ and $\ell(x, y)$, the convolution is defined by
\begin{equation*}
  (k * \ell)(x, y) = \int_M k(x, z) \ell(z, y) \dd z.
\end{equation*}
The mentioned approximation result is then the following (for reference, see \cite[Thm.~2.1.12]{LudewigThesis}; compare also Thm.~1.2 in \cite{LudewigStrongAsymptotics}).

\begin{theorem}[Convolution Approximation] \label{ThmConvolutionApproximation0}
Let $M$ be a compact Riemannian manifold and let $k_t$, $\ell_t \in L^\infty(M \times M)$ be two time-dependent kernels. For $T, R>0$ and $m \in \N$, suppose that there exist $\gamma_1, \gamma_2 \in \R$ and $c, \nu, \alpha_1, \dots, \alpha_m, \beta_1, \dots, \beta_m\geq0$ satisfying $\alpha_i + \beta_i/2 \geq 1 + \nu$ for each $1 \leq i \leq m$ such that for all $0 < t \leq T$ and all $x, y \in M$, we have
\begin{equation}\label{Estimate1}
 |\ell_t(x, y)|, |k_t(x, y)| \leq e^{\gamma_1 t + \gamma_2 d(x, y)^2} K_t(x, y),
\end{equation}
and for all $0 < t \leq T$ and all $x, y \in M$ with $d(x, y) < R$, we have
\begin{equation}\label{Estimate2}
  |k_t(x, y) - \ell_t(x, y)| \leq c \sum_{j=1}^m t^{\alpha_j} d(x, y)^{\beta_j} K_t(x, y).
\end{equation}
Then there exist constants $C, \delta>0$ such that for each partition $\tau = \{0 = \tau_0 < \tau_1 < \dots < \tau_N = t\}$ of intervals $[0, t]$ with $0 < t \leq T$ and $|\tau| \leq \delta t$, we have
\begin{equation*}
  \bigl|k_{\Delta_1\tau} * \cdots * k_{\Delta_N\tau} - \ell_{\Delta_1 \tau} * \cdots * \ell_{\Delta_N\tau}\bigr| \leq C t^{1-\beta/2}\, |\tau|^{\nu} K_t
\end{equation*}
uniformly on $M \times M$. Here, $\beta := \max_{1\leq i\leq m} \beta_i$ and $K_t$ is the heat kernel of the operator $\frac{1}{2}\Delta$ on $M$.
\end{theorem}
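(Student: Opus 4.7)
The plan is to reduce the global difference to a sum of local ones via telescoping, and then control each term by a convolution estimate that tracks Gaussian tails carefully. Writing $k_i := k_{\Delta_i\tau}$ and $\ell_i := \ell_{\Delta_i\tau}$, the starting identity is
\begin{equation*}
  k_1 * \cdots * k_N - \ell_1 * \cdots * \ell_N = \sum_{i=1}^N \ell_1 * \cdots * \ell_{i-1} * (k_i - \ell_i) * k_{i+1} * \cdots * k_N.
\end{equation*}
After taking absolute values, the factors on either side of $(k_i - \ell_i)$ are controlled by the a priori bound \eqref{Estimate1}, while the middle factor uses the refined bound \eqref{Estimate2} in the near field and, again, \eqref{Estimate1} in the far field.

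The first technical ingredient I would develop is a \emph{Gaussian stability} lemma: on a compact Riemannian manifold, the class of kernels bounded by $C e^{\gamma_1 t + \gamma_2 d(x,y)^2} K_t(x, y)$ is closed under convolution, with the constants $\gamma_1, \gamma_2$ shifted only by controlled amounts that remain uniform when one convolves $N$ kernels whose times sum to $t \leq T$. The essential input is the semigroup identity $K_s * K_{s'} = K_{s+s'}$ together with standard Gaussian upper bounds for $K_t$ (Li--Yau type), which allow one to absorb the perturbation $e^{\gamma_2 d^2}$ into a slightly enlarged Gaussian heat kernel. Compactness keeps $d(x,y)$ bounded by the diameter, so these absorptions do not blow up.

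With this in hand, I would treat the insertion of $(k_i - \ell_i)$. In the region $d(x_{i-1}, x_i) < R$ I apply \eqref{Estimate2}, producing a sum of terms of the form $c (\Delta_i\tau)^{\alpha_j} d(x_{i-1}, x_i)^{\beta_j} K_{\Delta_i\tau}(x_{i-1}, x_i)$. I interpret the full $N$-fold convolution as an integration against a discrete "heat bridge" of length $t$ from $x$ to $y$; the insertion of $d(x_{i-1}, x_i)^{\beta_j}$ at the $i$-th link contributes a moment of order $(\Delta_i\tau)^{\beta_j/2}$ times a possible $t^{-\beta_j/2}$ factor arising when one has to move the $d^{\beta_j}$-weight across the bridge to the endpoints, all multiplied by $K_t(x, y)$ and the slightly enlarged Gaussian factor from the stability lemma. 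In the far region $d(x_{i-1}, x_i) \geq R$ one uses \eqref{Estimate1}, and the Gaussian factor $K_{\Delta_i\tau}$ provides an extra $\exp(-c R^2 / \Delta_i\tau)$ which, for $|\tau| \leq \delta t$, is $O(|\tau|^M)$ for every $M$ and hence negligible.

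Assembling the summation gives each telescope term bounded by $C (\Delta_i\tau)^{\alpha_j + \beta_j/2} t^{-\beta_j/2} K_t(x,y)$ (up to a uniformly bounded Gaussian prefactor from the first lemma). The hypothesis $\alpha_j + \beta_j / 2 \geq 1 + \nu$ lets me write $(\Delta_i\tau)^{\alpha_j + \beta_j/2} \leq |\tau|^\nu \Delta_i\tau$, so $\sum_i \Delta_i\tau = t$ produces the final bound $C t^{1-\beta/2} |\tau|^\nu K_t$. The main obstacle will be the middle step: obtaining the exact power $t^{1-\beta/2}$ in the heat-bridge moment estimate on a curved manifold, and ensuring that the Gaussian prefactors from the stability lemma remain uniformly bounded as $N \to \infty$ under the constraint $|\tau| \leq \delta t$ — this is the place where the Li--Yau-type bounds and the curvature corrections must be bookkept sharply, rather than just qualitatively.
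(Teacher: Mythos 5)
This survey does not reproduce a proof of the Convolution Approximation theorem at all; it is quoted from Thm.~2.1.12 of the author's thesis (compare Thm.~1.2 of the companion paper on strong short-time asymptotics), and your telescoping decomposition combined with a Gaussian convolution-stability lemma and a near-field/far-field split is exactly the strategy used there. So your route is the intended one, and your exponent bookkeeping at the end --- $(\Delta_j\tau)^{\alpha_i+\beta_i/2} \le |\tau|^{\nu}\,\Delta_j\tau$ from $\alpha_i+\beta_i/2 \ge 1+\nu$, summed against $\sum_j \Delta_j\tau = t$ --- is the right way to produce the factor $t\,|\tau|^{\nu}$.

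Two places in your outline must be made quantitative or the argument genuinely fails. First, the stability lemma is the entire technical content of the theorem, and the phrase ``shifted only by controlled amounts'' hides the danger: if a single convolution of two kernels in the class $e^{\gamma_1 s+\gamma_2 d^2}K_s$ costs a fixed multiplicative constant $C>1$ (which is what a naive application of two-sided Gaussian bounds gives), then the $N$-fold product costs $C^N$, which diverges as $|\tau|\to 0$ with $t$ fixed. One must show that each step costs only a factor of the form $e^{c\,\Delta_j\tau}=1+O(\Delta_j\tau)$, so that the product over the chain is $e^{ct}\le e^{cT}$; in the flat model this is visible in the exact identity $e^{\gamma_2|x-z|^2}K_s(x,z)=(1-2\gamma_2 s)^{-n/2}K_{s/(1-2\gamma_2 s)}(x,z)$, whose prefactor is $1+O(s)$, and reproducing this $1+O(s)$ behaviour through the curved Gaussian bounds is where the cited proof spends its effort. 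Second, in the far field your termwise bound $\exp(-cR^2/\Delta_j\tau)=O(|\tau|^M)$ is correct, but you then sum over $j$, and $N$ is \emph{not} bounded above by $t/|\tau|$ (a partition may contain arbitrarily many very short intervals); split off $e^{-cR^2/(2|\tau|)}$ and use $e^{-cR^2/(2\Delta_j\tau)}\lesssim \Delta_j\tau/R^2$ so that the sum over $j$ is controlled by $t$. With these two points nailed down, your proposal is a faithful reconstruction of the actual proof.
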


This can be used as follows to prove Thm.~\ref{ThmHeatKernelPathIntegral}. Set $k_t(x, y) := K^H_t(x, y)$ and set $\ell_t(x, y)$ equal to
\begin{equation*}
  (2 \pi t)^{-n/2} \exp\left(-S[\gamma] + \frac{1}{12}\int_0^t\mathrm{scal}\bigl(\gamma_{xy;t}(s)\bigr)\dd s - \frac{1}{12}\mathrm{ric}(\dot{\gamma}_{xy;t}(0), \dot{\gamma}_{xy;t}(0))\right) J(x, y)^{-1},
\end{equation*}
where $\gamma_{xy;t}$ is the unique shortest minimizing geodesic between $x$ and $y$ parametrized by $[0, t]$ (which is defined for almost all pairs $(x, y) \in M \times M$). Using the short-time asymptotic expansion of the heat kernel (see e.g.\ \cite{bgv}, \cite{gilkey95}, \cite{LudewigStrongAsymptotics}), one then verifies that $k_t$ and $\ell_t$ satisfy the assumptions of Thm.~\ref{ThmConvolutionApproximation0} with $m=3$, $\alpha_1 = 2$, $\beta_1 = 0$, $\alpha_2 = 1$, $\beta_2 = 1$, $\alpha_3 = 0$, $\beta_3 = 0$ and $\nu = 1/2$. Hence there exists $C>0$ such that
\begin{equation*}
  \bigl| K_t^H - \ell_{\Delta_1\tau} * \cdots * \ell_{\Delta_N\tau}\bigr| \leq C \left(\frac{|\tau|}{t}\right)^{1/2} p_t
\end{equation*}
for all $t$ small enough and all partitions of the interval $[0, t]$ fine enough. Using the definition \eqref{DiscreteVolume} of the discrete volume on $H_{xy;\tau}(M)$, we then obtain
\begin{equation} \label{ConvolutionPathIntegral}
  \bigl(\ell_{\Delta_1\tau} * \cdots * \ell_{\Delta_N\tau}\bigr)(x, y) = (2 \pi t)^{-n/2} \fint_{H_{xy;\tau}(M)} \exp\bigl(-S[\gamma]\bigr) \, F_\tau (\gamma)\,  \dd^{\Sigma\text{-}H^1}\gamma,
\end{equation} 
where $F_\tau$ is the function
\begin{equation*}
  F_\tau (\gamma) := \exp\left( \frac{1}{12}\int_0^t\mathrm{scal}\bigl(\gamma(s)\bigr)\dd s - \frac{1}{12} \sum_{j=1}^N \mathrm{ric}(\Delta_j\gamma, \Delta_j\gamma)\right).
\end{equation*}
Here $\Delta_j\gamma := \dot{\gamma}(\tau_{j-1}+)\Delta_j\tau$, $j=1, \dots, N$, are the "increments" of $\gamma$. One now discovers that the scalar curvature integral is the quadratic variation of Brownian motion, while the right hand side is the discrete approximation of this. We therefore have
\begin{equation*}
  \lim_{|\tau|\rightarrow 0} \sum_{j=1}^N \mathrm{ric}(\Delta_j\gamma, \Delta_j\gamma) = \int_0^t\mathrm{scal}\bigl(\gamma(s)\bigr)\dd s,
\end{equation*} 
with the convergence being in measure with respect to the Wiener measure (compare Prop.~3.23 in \cite{EmeryStochastic}). Using this, one can show that one can drop the term $F_\tau$ in the integrand of \eqref{ConvolutionPathIntegral} and still end up with the same result in the limit $|\tau|\rightarrow 0$.

\section{Short-Time Expansions of Path Integrals}

In our previous discussions, the action functional $S_0$ and the $H^1$ metric played a prominent role. Even though it didn't appear explicitly in the above, we always had the space
\begin{equation*}
  H_{xy}(M) := \bigl\{ \gamma \in C([0, 1], M) \mid \gamma(0) = x, \gamma(1) = y, S_0[\gamma] < \infty \bigr\}
\end{equation*}
of finite action paths lurking in the background. This is an infinite-dimensional manifold modelled on a Hilbert space, and the continuous $H^1$ metric \eqref{ContinuousH1} turns it into an infinite-dimensional Riemannian manifold. It is equal to the space of paths which have Sobolev regularity $H^1$ in local charts and by definition the largest space where the action functional $S_0$ is defined. In this section, we will discuss the relations between the heat kernel representations of the path integrals discussed above und this infinite-dimensional path space.

By rescaling the paths, we can always arrange the domain of definition of our paths to be the interval $[0, 1]$, which we will be doing henceforth. This has the advantage that the dependence on the $t$ parameter becomes more clear in the formulas. Looking at Thm.~\ref{ThmHeatKernelPathIntegral}, we may get the idea to write the heat kernel $K_t(x, y)$ of the operator $\frac{1}{2}\Delta$ as an integral over the infinite-dimensional manifold $H_{xy}(M)$, 
\begin{equation} \label{FormalPathIntegral}
  K_t(x, y) \stackrel{\text{formally}}{=} (2 \pi t)^{-n/2} \fint_{H_{xy}(M)} \exp \bigl(-S_0[\gamma]/ t\bigr) \,\dd^{H^1}\gamma,
\end{equation}
where the $1/t$ in the exponent comes from the rescaling of the domain of definition of the paths. Of course, this is only formal, as can be seen e.g.\ by the fact that the normalization constant indicated by the slash over the integral sign should now be $(2\pi t)^{\dim(H_{xy}(M))}$, which does not make sense. However, taking the formula \eqref{FormalPathIntegral} seriously for the moment, we notice that the integral has the form of a Laplace integral, which can be evaluated using the method of stationary phase as $t \rightarrow 0$ (see e.g.\ \cite[Section~1.2]{Duistermaat}): Suppose that $x$ and $y$ are close to each other, so that there is a unique minimizing geodesic $\gamma_{xy} \in H_{xy}(M)$ connecting the two. Then this is the unique non-degenerate minimum of the action functional on $H_{xy}(M)$, and we have $S_0[\gamma_{xy}] = d(x, y)^2/2$. The method of stationary phase therefore asserts that
\begin{equation} \label{FormalStationaryPhase}
 (2 \pi t)^{-n/2} \fint_{H_{xy}(M)} \exp \bigl(-S_0[\gamma]/ t\bigr) \,\dd^{H^1}\gamma ~\sim~ (2 \pi t)^{-n/2} e^{-\frac{d(x, y)^2}{2t}}\,\det\bigl(\nabla^2 S_0|_{\gamma_{xy}}\bigr)^{-1/2},
\end{equation}
where the asymptotic relation means that the quotient of the two sides converges to one as $t \rightarrow 0$. Here the Hessian of the action $S_0$ comes into play, which is well-known to essentially be given by the Jacobi differential equation, 
\begin{equation*}
\begin{aligned}
  \nabla^2 S_0|_\gamma[X, Y] &= \int_0^1 \Bigl( \bigl\langle \nabla_s X(s),  \nabla_s Y(s) \bigr\rangle + \bigl\langle R\bigl(\dot{\gamma}(s), X(s)\bigr) \dot{\gamma}(s), Y(s)\bigr\rangle \Bigr)\dd s\\
  &= (X, Y)_{H^1} + (\mathcal{R}_\gamma X, Y)_{L^2},
\end{aligned}
\end{equation*}
where we set $\mathcal{R}_\gamma(s) := R(\dot{\gamma}(s), -) \dot{\gamma}(s)$ for the {\em Jacobi endomorphism} (see e.g.\ \cite[Thm.~13.1]{MilnorMorseTheory}). Dualizing with respect to the $H^1$ metric, the Hessian $\nabla^2 S_0|_\gamma$ is therefore given by the operator $\mathrm{id} + (-\nabla_s^2)^{-1} \mathcal{R}_\gamma$. The "perturbation" $(-\nabla_s^2)^{-1} \mathcal{R}_\gamma$ is easily seen to be trace-class so that the $H^1$-determinant appearing in \eqref{FormalStationaryPhase} is well-defined as a Hilbert space determinant. It is therefore natural to ask if \eqref{FormalStationaryPhase} indeed gives the correct heat kernel asymptotics. This turns out to be the case, even in the degenerate case, when $x, y$ are in each other's cut locus.

\begin{theorem}[Heat Kernel Asymptotics] \label{ThmFirstOrderAsymptotics}
  Let $M$ be an $n$-dimensional compact Riemannian manifold with heat kernel $K_t(x, y)$. For $x, y \in M$, suppose that the set $\Gamma_{xy}^{\min}$ of minimizing geodesics between $x$ and $y$ is a $k$-dimensional submanifold of $H_{xy}(M)$ which is non-degenerate in the sense that for $\gamma \in \Gamma_{xy}^{\min}$, the Hessian $\nabla^2S_0|_\gamma$ is non-degenerate when restricted to the normal space $N_\gamma \Gamma_{xy}^{\min}$ of $\Gamma_{xy}^{\min}$ in $H_{xy}(M)$. Then we have
  \begin{equation*}
    \lim_{|\tau|\rightarrow 0} (2 \pi t)^{n/2+k/2} e^{\frac{d(x, y)^2}{2t}} K_t(x, y) = \int_{\Gamma_{xy}^{\min}} \det\bigl(\nabla^2 S_0|_{N_\gamma \Gamma_{xy}^{\min}}\bigr)^{-1/2} \dd^{H^1} \gamma,
  \end{equation*}
  where we integrate with respect to the Riemannian volume measure on $\Gamma_{xy}^{\min}$ determined by the continuous $H^1$ metric.
\end{theorem}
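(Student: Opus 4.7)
The plan is to combine the finite-dimensional path-integral representation from Theorem~\ref{ThmHeatKernelPathIntegral} with a Morse-Bott Laplace method, and then let the partition mesh go to zero to recover the infinite-dimensional $H^1$-determinant. First I would reparametrize all paths to the interval $[0,1]$, so that the integral in Theorem~\ref{ThmHeatKernelPathIntegral} becomes
\begin{equation*}
K_t(x,y) = \lim_{|\tau|\to 0}\,(2\pi t)^{-n/2}\fint_{H_{xy;\tau}(M)} \exp\bigl(-S_0[\gamma]/t\bigr)\,\dd^{\Sigma\text{-}H^1}\gamma,
\end{equation*}
which is a genuine Laplace-type integral on the finite-dimensional manifold $H_{xy;\tau}(M)$ with small parameter $t$. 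Under the non-degeneracy hypothesis on $\Gamma_{xy}^{\min}$, for $|\tau|$ small enough the discrete analogue $\Gamma_{xy;\tau}^{\min}\subset H_{xy;\tau}(M)$ is a smooth $k$-dimensional critical submanifold on which $S_0$ attains its global minimum $d(x,y)^2/2$, with non-degenerate normal Hessian.

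Second, on each $H_{xy;\tau}(M)$ I would apply the Morse-Bott version of the Laplace method. After pulling out the factor $\exp(-d(x,y)^2/(2t))$, the Gaussian integration in the normal directions produces $(2\pi t)^{(\dim H_{xy;\tau}(M)-k)/2}$ together with $\det(\nabla^2 S_0|_{N_\gamma \Gamma_{xy;\tau}^{\min}})^{-1/2}$; combined with the normalization $(2\pi)^{-\dim H_{xy;\tau}(M)/2}$ hidden in the slashed integral and the explicit prefactor, the powers of $2\pi$ and $t$ collapse to $(2\pi t)^{-n/2-k/2}$, leaving
\begin{equation*}
(2\pi t)^{-n/2-k/2}\,e^{-d(x,y)^2/(2t)}\int_{\Gamma_{xy;\tau}^{\min}} \det\bigl(\nabla^2 S_0|_{N_\gamma \Gamma_{xy;\tau}^{\min}}\bigr)^{-1/2}\dd^{\Sigma\text{-}H^1}\gamma
\end{equation*}
up to a relative error that is $O(t)$ for each fixed partition.

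Third, I would pass to the limit $|\tau|\to 0$ inside these finite-dimensional asymptotics. Since $\nabla^2 S_0|_\gamma = \mathrm{id} + (-\nabla_s^2)^{-1}\mathcal{R}_\gamma$ is a trace-class perturbation of the identity on $H^1$ and the piecewise-Jacobi vector fields form a consistent Galerkin family exhausting the ambient $H^1$ Hilbert space, the discrete normal Hessian determinants converge to the infinite-dimensional Fredholm determinant by the standard projection convergence theorem for Fredholm determinants. The volume integrals converge because $\Gamma_{xy;\tau}^{\min}$ is a piecewise-geodesic discretization of $\Gamma_{xy}^{\min}$ with compatible Riemannian structures.

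The main obstacle is interchanging the two limits $t\to 0$ and $|\tau|\to 0$: the Morse-Bott Laplace step must be made uniform in $\tau$, which demands uniform control on the second fundamental form of $\Gamma_{xy;\tau}^{\min}$ inside $H_{xy;\tau}(M)$ and a uniform spectral gap of the normal Hessian. A more robust alternative, closer in spirit to the convolution-approximation strategy of the preceding section, would be to apply Theorem~\ref{ThmConvolutionApproximation0} with $\ell_t$ built from the Minakshisundaram-Pleijel parametrix; this reduces the statement to the Jacobi-field identity $J(x,y)^{-1/2} = \det(\nabla^2 S_0|_{\gamma_{xy}})^{-1/2}$ between the classical heat-kernel coefficient and the $H^1$-Hessian determinant, together with a Morse-Bott splitting transverse to $\Gamma_{xy}^{\min}$ in the case $k>0$ and a localization argument near the cut locus to handle points where the minimizing set has positive dimension.
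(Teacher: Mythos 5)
Your proposal follows essentially the same route the paper indicates: the paper explicitly warns that the naive plan of your first three paragraphs --- finite-dimensional stationary phase on $H_{xy;\tau}(M)$ via Thm.~\ref{ThmHeatKernelPathIntegral} followed by the mesh limit --- founders on the interchange of the limits $t\to 0$ and $|\tau|\to 0$, and the fix you name in your final paragraph (a time-uniform approximation of $K_t(x,y)$ by integrals over $H_{xy;\tau}(M)$, obtained from Thm.~\ref{ThmConvolutionApproximation0} with a parametrix-built $\ell_t$, at the cost of extra curvature terms in the integrand that vanish in the short-time limit) is precisely the strategy the paper adopts, deferring details to \cite{LudewigThesis} and \cite{LudewigDeterminants}. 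Two cautions: the identity $J(x,y)=\det\bigl(\nabla^2 S_0|_{\gamma_{xy}}\bigr)$ is stated in the paper as a \emph{corollary} of this very theorem, so to use it as an input you must prove it independently (e.g.\ by a Gelfand--Yaglom/Jacobi-field computation of the Fredholm determinant); and the convergence of the discrete normal Hessian determinants is not a bare Galerkin projection argument, since the finite-dimensional integrals are taken with respect to the discrete metric \eqref{DiscreteH1}, which on a curved manifold is not the restriction of the continuous $H^1$ metric \eqref{ContinuousH1} appearing in the limit.
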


These are exactly the asymptotics that are to be expected when one formally takes the stationary phase expansion of the path integral \eqref{FormalPathIntegral}, also in the degenerate case when $\dim(\Gamma_{xy}^{\min}) > 0$ (compare the appendix in \cite{LudewigStrongAsymptotics}). Of course, if $x$ and $y$ are close, then $\Gamma_{xy}^{\min} = \{\gamma_{xy}\}$ and the integral is just the evaluation at $\gamma_{xy}$, so that Thm.~\ref{ThmFirstOrderAsymptotics} reduces to the result formally obtained in \eqref{FormalStationaryPhase}. In this case, it is furthermore known that the heat kernel has the short-time asymptotic expansion
\begin{equation*}
  K_t(x, y) ~\sim~ (2 \pi t)^{-n/2} e^{-\frac{d(x, y)^2}{2t}} J(x, y)^{-1/2}
\end{equation*}
involving the Jacobian of the Riemannian exponential map $J(x, y)$ that already appeared in \eqref{DiscreteVolume}. Comparing with Thm.~\ref{ThmFirstOrderAsymptotics}, one therefore obtains the following corollary.

\begin{corollary}
The Jacobian of the exponential map $J(x, y) = \det( d\exp_x|_{\dot{\gamma}_{xy}(0)}) = \det(g_{ij}(y))^{1/2}$ is equal to the Hilbert space determinant of the action functional on the tangent space of $H_{xy}(M)$ at $\gamma_{xy}$,
\begin{equation*}
  J(x, y) = \det\bigl(\nabla^2 S_0|_{\gamma_{xy}}\bigr).
\end{equation*}
where the determinant is taken with respect to the continuous $H^1$ metric \eqref{ContinuousH1}.
\end{corollary}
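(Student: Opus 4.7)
The plan is to match two independent small-$t$ asymptotic descriptions of $K_t(x,y)$ and read off the claimed equality from the uniqueness of limits. On one side I will specialize Theorem~\ref{ThmFirstOrderAsymptotics} to the non-degenerate single-minimizer case; on the other side I will invoke the classical off-diagonal Minakshisundaram--Pleijel expansion quoted in the paragraph immediately preceding the corollary.

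Under the hypothesis of the corollary, $x$ and $y$ are close enough that the minimizing geodesic $\gamma_{xy}$ is unique, so the set $\Gamma_{xy}^{\min} = \{\gamma_{xy}\}$ is a $0$-dimensional submanifold of $H_{xy}(M)$ whose normal space coincides with the full tangent space $T_{\gamma_{xy}} H_{xy}(M)$. For $y$ sufficiently close to $x$ the Jacobi endomorphism $\mathcal{R}_{\gamma_{xy}}$ is small, so the operator $\mathrm{id} + (-\nabla_s^2)^{-1}\mathcal{R}_{\gamma_{xy}}$ representing $\nabla^2 S_0|_{\gamma_{xy}}$ with respect to the continuous $H^1$ metric is a small trace-class perturbation of the identity, hence non-degenerate with strictly positive Fredholm determinant. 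Applying Theorem~\ref{ThmFirstOrderAsymptotics} with $k=0$, integration over the singleton $\{\gamma_{xy}\}$ reduces to a single evaluation, yielding
$$\lim_{t \to 0} (2\pi t)^{n/2} e^{d(x,y)^2/(2t)} K_t(x,y) = \det\bigl(\nabla^2 S_0|_{\gamma_{xy}}\bigr)^{-1/2}.$$

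The classical short-time expansion recalled just before the corollary, $K_t(x,y) \sim (2\pi t)^{-n/2} e^{-d(x,y)^2/(2t)} J(x,y)^{-1/2}$, gives the same limit equal to $J(x,y)^{-1/2}$. Since both $J(x,y)$ and $\det(\nabla^2 S_0|_{\gamma_{xy}})$ are strictly positive, squaring gives the identity $J(x,y) = \det(\nabla^2 S_0|_{\gamma_{xy}})$ without sign ambiguity. There is essentially no conceptual obstacle at this stage, as all of the analytic difficulty has been packed into Theorem~\ref{ThmFirstOrderAsymptotics} and into the justification that the formal infinite-dimensional stationary-phase determinant makes sense as a Hilbert-space determinant. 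The only points requiring care are the book-keeping of the prefactors $(2\pi t)^{\pm n/2}$ and $e^{\pm d(x,y)^2/(2t)}$, and a reminder that the determinant must be interpreted with respect to the continuous $H^1$ metric \eqref{ContinuousH1}, since the discretized metric \eqref{DiscreteH1} or the $L^2$ variants discussed earlier would produce different normalizations and a scalar-curvature correction, spoiling the clean identification with $J(x,y)$.
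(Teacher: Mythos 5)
Your proposal is correct and follows essentially the same route as the paper: specialize Theorem~\ref{ThmFirstOrderAsymptotics} to the case $\Gamma_{xy}^{\min}=\{\gamma_{xy}\}$, $k=0$, compare the resulting limit $\det\bigl(\nabla^2 S_0|_{\gamma_{xy}}\bigr)^{-1/2}$ with the classical off-diagonal expansion $K_t(x,y)\sim(2\pi t)^{-n/2}e^{-d(x,y)^2/(2t)}J(x,y)^{-1/2}$, and equate. The extra remarks on positivity and on the choice of the continuous $H^1$ metric are sensible but not substantively different from the paper's argument.
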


At first glance, one could think that Thm.~\ref{ThmFirstOrderAsymptotics} follows by evaluating the finite-dimensional path integrals over $H_{xy;\tau}(M)$ in Thm.~\ref{ThmHeatKernelPathIntegral} with the method of stationary phase and then taking the limit $|\tau|\rightarrow 0$. A problem is however that Thm.~\ref{ThmHeatKernelPathIntegral} gives no control over the time-uniformity of the approximation, i.e.\ there is no reason one should be allowed to exchange taking the mesh-limit $|\tau|\rightarrow 0$ and the time-limit $t\rightarrow 0$. This can be fixed though by using Thm.~\ref{ThmConvolutionApproximation0}, which provides careful error estimates of the approximation. More concretely, one proves a different theorem on approximating the heat kernel by integrals over $H_{xy;\tau}(M)$ which is time-uniform; the downside is that one picks up a more complicated integrand involving curvature terms and which then disappears again in the short-time limit. For detais, refer to \cite{LudewigThesis} or \cite{LudewigDeterminants}.

\medskip

In applications from physics, path integrals are often regularized using zeta determinants, see e.g.\ \cite{Hawking} or \cite{WittenDiracOperators}. In our case, instead of taking the determinant of $\nabla^2 S_0|_\gamma$ with respect to the $H^1$ metric, i.e.\ the Hilbert space determinant of the operator $\mathrm{id} + (\nabla_s^2)^{-1} \mathcal{R}_\gamma$, one can also take the zeta-regularized determinant of the operator $\nabla_s^2 + \mathcal{R}_\gamma$ on $L^2([0, 1], \gamma^*TM)$ (with Dirichlet boundary conditions, since our paths have fixed endpoints). This leads to the following result.

\begin{theorem}[Heat Kernel Asymptotics II] \label{ThmFirstOrderAsymptoticsII}
Under the assumptions of Thm.~\ref{ThmFirstOrderAsymptotics}, we equivalently have
  \begin{equation*}
    \lim_{|\tau|\rightarrow 0} (2 \pi t)^{n/2+k/2} e^{\frac{d(x, y)^2}{2t}} p_t(x, y) = \int_{\Gamma_{xy}^{\min}} \frac{\det\nolimits_\zeta(-\nabla_s^2)^{1/2}}{\det\nolimits_\zeta^\prime\bigl(-\nabla_s^2 + \mathcal{R}_\gamma\bigr)^{1/2}} \dd^{H^1} \gamma,
  \end{equation*}
  where we integrate with respect to the Riemannian volume measure on $\Gamma_{xy}^{\min}$ with respect to the continuous $H^1$ metric.
\end{theorem}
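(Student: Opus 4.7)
The plan is to deduce this theorem from Theorem~\ref{ThmFirstOrderAsymptotics} by establishing a pointwise identity on $\Gamma_{xy}^{\min}$ between the Hilbert space determinant appearing there and the ratio of zeta-regularized determinants appearing here. Concretely, after pulling the zeta expression into the integrand, it is enough to show that for each $\gamma \in \Gamma_{xy}^{\min}$,
\begin{equation*}
\det\bigl(\nabla^2 S_0|_{N_\gamma \Gamma_{xy}^{\min}}\bigr) = \frac{\det\nolimits'_\zeta\bigl(-\nabla_s^2 + \mathcal{R}_\gamma\bigr)}{\det\nolimits_\zeta\bigl(-\nabla_s^2\bigr)},
\end{equation*}
where the left side is the Fredholm determinant of $\mathrm{id} + (-\nabla_s^2)^{-1} \mathcal{R}_\gamma$ restricted to $N_\gamma \Gamma_{xy}^{\min}$ (using the $H^1$ dualization of $\nabla^2 S_0$ recalled in the text above).

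First I would identify the kernel of $-\nabla_s^2 + \mathcal{R}_\gamma$ on $L^2([0,1], \gamma^*TM)$ with Dirichlet boundary conditions. By definition of $\mathcal{R}_\gamma$, this operator is exactly the Jacobi operator along $\gamma$, so its kernel consists of Jacobi fields vanishing at $s=0$ and $s=1$. Under the non-degeneracy hypothesis of Theorem~\ref{ThmFirstOrderAsymptotics}, these are precisely the elements of $T_\gamma \Gamma_{xy}^{\min}$. Consequently the prime on $\det\nolimits'_\zeta$ amounts to removing the zero modes $T_\gamma \Gamma_{xy}^{\min}$, and what remains is the zeta-regularized determinant of the invertible operator $-\nabla_s^2 + \mathcal{R}_\gamma$ acting on the orthogonal complement $N_\gamma \Gamma_{xy}^{\min}$ (which matches the domain of the Hessian on the left side). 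Since $\mathcal{R}_\gamma$ is a bounded endomorphism field along $\gamma$, the operator $(-\nabla_s^2)^{-1} \mathcal{R}_\gamma$ is trace-class on this complement, so $\det(\mathrm{id} + (-\nabla_s^2)^{-1} \mathcal{R}_\gamma)$ is a well-defined Fredholm determinant.

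The core of the proof is then the identity between this Fredholm determinant and the ratio of zeta determinants on the right. I would prove this by a one-parameter deformation: set $A_r := -\nabla_s^2 + r \mathcal{R}_\gamma$ for $r \in [0,1]$, work throughout on $N_\gamma \Gamma_{xy}^{\min}$, and show that both $r \mapsto \log \det(\mathrm{id} + r(-\nabla_s^2)^{-1}\mathcal{R}_\gamma)$ and $r \mapsto \log\det\nolimits_\zeta(A_r) - \log\det\nolimits_\zeta(A_0)$ have logarithmic derivative $\tr\bigl(A_r^{-1} \mathcal{R}_\gamma\bigr)$. For the Fredholm side this is a direct calculation using trace-class calculus; for the zeta side it follows from the standard differentiation rule $\frac{d}{dr}\log\det\nolimits_\zeta(A_r) = \tr(A_r^{-1} \dot A_r)$, valid because $\mathcal{R}_\gamma$ is of low enough order relative to $-\nabla_s^2$ that no multiplicative anomaly appears. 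Both sides agree at $r=0$ (both are $1$ after exponentiating the difference of the zeta determinants), so they coincide at $r=1$.

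The main obstacle I anticipate is the careful handling of the degenerate case, where one must split the space into $T_\gamma \Gamma_{xy}^{\min} \oplus N_\gamma \Gamma_{xy}^{\min}$, restrict all determinants to the latter, and argue that the splitting is compatible with the zeta regularization (i.e.\ that $\det\nolimits'_\zeta(-\nabla_s^2 + \mathcal{R}_\gamma)$ can be computed as the zeta determinant on the invertible part with no boundary contributions from the projection onto the kernel). The bookkeeping for the powers of $t$ and the integration against the $H^1$ volume on $\Gamma_{xy}^{\min}$ is then routine once the pointwise identity is in place, and Theorem~\ref{ThmFirstOrderAsymptotics} finishes the proof.
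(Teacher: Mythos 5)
The paper itself does not prove this theorem; it defers to \cite[Section~3.2]{LudewigThesis} and \cite{LudewigDeterminants}. Your overall strategy --- deducing Theorem~\ref{ThmFirstOrderAsymptoticsII} from Theorem~\ref{ThmFirstOrderAsymptotics} via a pointwise identity between the $H^1$ Fredholm determinant of $\nabla^2 S_0|_\gamma = \id + (-\nabla_s^2)^{-1}\mathcal{R}_\gamma$ and a ratio of zeta determinants, proved by differentiating along the family $A_r = -\nabla_s^2 + r\mathcal{R}_\gamma$ --- is indeed the route taken there, and in the non-degenerate case ($k=0$, $\Gamma_{xy}^{\min}=\{\gamma_{xy}\}$) your argument is essentially complete: $(-\nabla_s^2)^{-1}\mathcal{R}_\gamma$ is trace class for operators over $[0,1]$, both logarithmic derivatives equal $\tr(A_r^{-1}\mathcal{R}_\gamma)$ with no multiplicative anomaly, and $A_r=(1-r)(-\nabla_s^2)+r(-\nabla_s^2+\mathcal{R}_\gamma)$ stays positive for $r\in[0,1]$ because $\gamma$ is minimizing.

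The genuine gap is in the degenerate case, precisely at the point you set aside as bookkeeping. The identity $\det\bigl(\nabla^2 S_0|_{N_\gamma \Gamma_{xy}^{\min}}\bigr)=\det\nolimits_\zeta'(-\nabla_s^2+\mathcal{R}_\gamma)\,/\det\nolimits_\zeta(-\nabla_s^2)$ does not hold as stated: the left-hand side lives on the \emph{$H^1$-orthogonal} complement of $\ker(-\nabla_s^2+\mathcal{R}_\gamma)$, while $\det\nolimits_\zeta'$ removes the zero modes relative to the \emph{$L^2$} structure, and these two complements differ. Already in finite dimensions, for $A>0$ and singular $B=A+R$ one has
\begin{equation*}
\det\Bigl((A^{-1}B)\big|_{(\ker B)^{\perp_A}}\Bigr)=\frac{\det{}'B}{\det A}\cdot\frac{\det\bigl(\langle v_i,v_j\rangle_A\bigr)}{\det\bigl(\langle v_i,v_j\rangle\bigr)}
\end{equation*}
for any basis $(v_i)$ of $\ker B$ (try $A=\mathrm{diag}(1,2)$ and $B$ the all-ones matrix: the left side is $3/2$, the naive ratio is $1$). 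The correction factor here is $\det\bigl((e_i,e_j)_{H^1}\bigr)/\det\bigl((e_i,e_j)_{L^2}\bigr)$ for a basis of $T_\gamma\Gamma_{xy}^{\min}=\ker(-\nabla_s^2+\mathcal{R}_\gamma)$, i.e.\ exactly the density relating the $H^1$ and $L^2$ Riemannian volumes on $\Gamma_{xy}^{\min}$; it is a nontrivial function of $\gamma$ (on the round sphere at antipodal points the two Gram matrices of the Jacobi fields $\sin(\pi s)E$ differ by a factor $\pi^2$ in each direction), so it cannot be dropped under the integral. Relatedly, the deformation cannot be ``run on $N_\gamma\Gamma_{xy}^{\min}$ throughout'': the operators $A_r$ with $r<1$ do not preserve that ($H^1$-defined) subspace, and $\tr(A_r^{-1}\mathcal{R}_\gamma)$ diverges as $r\to 1$; one must instead compare the rates at which both determinants vanish as $r\to 1$, and matching these rates is exactly where the Gram-determinant factor is produced. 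Until you carry this factor through and decide where it is absorbed (into the volume on $\Gamma_{xy}^{\min}$ or into the integrand), the reduction of Theorem~\ref{ThmFirstOrderAsymptoticsII} to Theorem~\ref{ThmFirstOrderAsymptotics} is not established.
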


For a proof and further discussion of in Thm.~\ref{ThmFirstOrderAsymptoticsII}, we again refer to \cite[Section~3.2]{LudewigThesis} or \cite{LudewigDeterminants}.

\begin{remark}Here, for an unbounded self-adjoint linear operator $L$ on a Hilbert space with non-negative eigenvalues, the zeta-function is defined by
\begin{equation*} 
  \zeta_L(z) = \sum_{\lambda >0} \lambda^{-z},
\end{equation*}
where the sum goes over the non-negative eigenvalues of $L$; it converges for $\mathrm{Re}(z)$ large if the eigenvalues grow sufficiently fast. For elliptic operators (with suitable boundary conditions), one can show that $\zeta_L$ of has a meromorphic extension to the complex plane which is regular at zero. The zeta-regularized determinant of $L$ is then defined by $\det\nolimits_\zeta(L) = \exp(-\zeta^\prime(0))$ if $L$ has only positive eigenvalues and  $\det\nolimits_\zeta(L) = 0$ otherwise. In the case that $L$ has zero eigenvalues, we furthermore define $\det\nolimits_\zeta^\prime(L) = \exp(-\zeta^\prime(0))$. Notice that $\det\nolimits_\zeta^\prime\bigl(-\nabla_s^2 + \mathcal{R}_\gamma\bigr)$ in Thm.~\ref{ThmFirstOrderAsymptoticsII} is the regularized analog of $\det(\nabla^2 S_0|_{N_\gamma \Gamma_{xy}^{\min}})$ in Thm.~\ref{ThmFirstOrderAsymptotics}.
\end{remark}

\begin{remark}
The zeta determinant  of the operator $-\nabla_s^2$ on $L^2([0, 1], \gamma^*TM)$ with Dirichlet boundary conditions can easily calculated to be $\det\nolimits_\zeta(-\nabla_s^2) = 2^n$.
\end{remark}

    \bibliography{Literatur}

\begin{thebibliography}{SvWW07}

\bibitem[AD99]{AnderssonDriver}
Lars Anderson and Bruce Driver.
\newblock Finite dimensional approximations to wiener measure and path integral
  formulas on manifolds.
\newblock {\em Journal of Functional Analysis}, 165:430--498, 1999.

\bibitem[B{\"a}r12]{baerrenormalization}
Christian B{\"a}r.
\newblock Renormalized integrals and a path integral formula for the heat
  kernel on a manifold.
\newblock {\em Contemporary Mathematics}, 584:179--197, 2012.

\bibitem[BBGL18]{SFBReport}
Francesco Bei, Jochen Br\"{u}ning, Batu G\"{u}neysu, and Matthias Ludewig.
\newblock Geometric analysis on singular spaces.
\newblock In {\em Space---time---matter}, pages 349--416. De Gruyter, Berlin,
  2018.

\bibitem[BGV04]{bgv}
Nicole Berline, Ezra Getzler, and Michele Vergne.
\newblock {\em Heat Kernels and Dirac Operators}.
\newblock Springer, Berlin, Heidelberg, New York, 2004.

\bibitem[BP08]{bpapproximation}
Christian B{\"a}r and Frank Pf{\"a}ffle.
\newblock Path integrals on manifolds by finite dimensional approximation.
\newblock {\em J. reine angew. Math.}, 625:29--57, 2008.

\bibitem[BP11]{bpFeynmanKac}
Christian B{\"a}r and Frank Pf{\"a}ffle.
\newblock Wiener measures on {R}iemannian manifolds and the {F}eynman-{K}ac
  formula.
\newblock {\em Matematica Contemporanea}, 40:37--90, 2011.

\bibitem[Che86]{chernoff}
P.~R. Chernoff.
\newblock Note on product formulas for operator semigroups.
\newblock {\em J. Func. Anal}, 2:238--242, 1986.

\bibitem[Dui96]{Duistermaat}
J.~J. Duistermaat.
\newblock {\em Fourier Integral Operators}.
\newblock Springer, Berlin, Heidelberg, New York, 1996.

\bibitem[Eme89]{EmeryStochastic}
Michel Emery.
\newblock {\em Stochastic Analysis in Manifolds}.
\newblock Universitext. Springer, Berlin, Heidelberg, New York, 1989.

\bibitem[FH65]{FeynmanHibbs}
R.~Feynman and A.~R. Hibbs.
\newblock {\em Quantum Mechanics and Path Integrals}.
\newblock McGraw-Hill Companies, New York, St. Louis, San Francisco, 1965.

\bibitem[Gil95]{gilkey95}
Peter Gilkey.
\newblock {\em Invariance Theory, the Heat Equation, and the Atiyah-Singer
  Index Theorem}.
\newblock CRC Press, Boca Raton, 1995.

\bibitem[Haw77]{Hawking}
S.~W. Hawking.
\newblock Zeta function regularization of path integrals in curved spacetime.
\newblock {\em Commun. Math. Phys.}, 55:133--148, 1977.

\bibitem[Lae13]{Laetsch}
Thomas Laetsch.
\newblock An approximation to {W}iener measure and quantization of the
  {H}amiltonian on manifolds with non-positive sectional curvature.
\newblock {\em Journal of Functional Analysis}, 265(8):1667--1727, 2013.

\bibitem[Lim07]{LimPathIntegrals}
A.~Lim.
\newblock Path integrals on a compact manifold with non-negative curvature.
\newblock {\em Reviews in Mathematical Physics}, 19(9):967--1044, 2007.

\bibitem[Lud16a]{LudewigThesis}
Matthias Ludewig.
\newblock {\em Path Integrals on Manifolds with Boundary and their asymptotic
  Expansions}.
\newblock PhD thesis, Universit\"at Potsdam, Potsdam, 2016.

\bibitem[Lud16b]{LudewigStrongAsymptotics}
Matthias Ludewig.
\newblock Strong short time asymptotics and convolution approximation of the
  heat kernel.
\newblock arXiv:1607.05152, 2016.

\bibitem[Lud17]{LudewigBoundary}
Matthias Ludewig.
\newblock Path integrals on manifolds with boundary.
\newblock {\em Comm. Math. Phys.}, 354(2):621--640, 2017.

\bibitem[Lud18]{LudewigDeterminants}
Matthias Ludewig.
\newblock Heat kernel asymptotics, path integrals and infinite-dimensional
  determinants.
\newblock {\em J. Geom. Phys.}, 131:66--88, 2018.

\bibitem[Mil63]{MilnorMorseTheory}
J.~Milnor.
\newblock {\em Morse Theory}.
\newblock Princeton University Press, Princeton, 1963.

\bibitem[{\O}ks07]{oeksendal}
Bernt {\O}ksendal.
\newblock {\em Stochastic Differential Equations}.
\newblock Springer, Berlin, Heidelberg, New York, 2007.

\bibitem[SvWW07]{WeizsaeckerSmolyanov05}
O.~Smolyanov, H.~v.~Weizs{\"a}cker, and Olaf Wittich.
\newblock Chernoff's theorem and discrete time approximations of {B}rownian
  motion on manifolds.
\newblock {\em Potential Analysis}, 26(1):1--29, 2007.

\bibitem[Wit99]{WittenDiracOperators}
E.~Witten.
\newblock Index of dirac operators.
\newblock In P.~Deligne et~al., editor, {\em Quantum Fields and Strings: A
  Course for Mathematicians}. American Mathematical Society, Providence, 1999.

\end{thebibliography}

\end{document}